\documentclass[11pt]{amsart}

\usepackage[a4paper,margin=1in]{geometry}
\usepackage{amsmath,amssymb,amsthm,mathtools}
\usepackage{booktabs}
\usepackage{enumitem}
\usepackage{tikz-cd}
\usepackage{quiver}
\usepackage{graphicx}
\usepackage{float}
\usepackage{hyperref}
\usepackage[nameinlink,capitalise]{cleveref}
\hypersetup{
  colorlinks=true,
  linkcolor=blue,
  citecolor=blue,
  urlcolor=blue
}

\DeclareMathOperator{\Gal}{Gal}

\DeclareMathOperator{\Div}{Div}
\DeclareMathOperator{\Cl}{Cl}

\DeclareMathOperator{\Ker}{Ker}

\newcommand{\F}{\mathbb F}
\newcommand{\Z}{\mathbb Z}
\newcommand{\C}{\mathbb C}

\newcommand{\Nhat}{\widehat N}

\newcommand{\frakp}{\mathfrak p}
\newcommand{\frakP}{\mathfrak P}
\newcommand{\ac}{\mathrm{ac}}

\newcommand{\defeq}{:=}

\theoremstyle{plain}
\newtheorem{theorem}{Theorem}[section]
\newtheorem{proposition}[theorem]{Proposition}
\newtheorem{lemma}[theorem]{Lemma}

\theoremstyle{definition}
\newtheorem{definition}[theorem]{Definition}

\theoremstyle{remark}

\title[Multiplicator freeness for restricted-ramification Galois groups]{Multiplicator Freeness for Restricted-Ramification Galois Groups over Global Function Fields}
\author{Qi Liu\(^{*}\)}
\address{School of Mathematical Sciences, Nanjing Normal University, Nanjing 210023, China}
\email{QiLiu67@aliyun.com}
\author{Zugan Xing\(^{*}\)}
\address{School of Mathematical Sciences, Shanghai Jiao Tong University\\Shanghai 200240, China}
\email{xingzugan@aliyun.com}
\thanks{\(^{*}\)These authors contributed equally to this work.}
\subjclass[2020]{Primary 11R58; Secondary 11S20,11R37, 11R34, }
\keywords{Function fields,Restricted ramification, Central extensions, Schur multiplicator, Pro-\(\ell\) groups, Idele class characters}

\begin{document}

\begin{abstract}
	Let \(K\) be a global function field with full constant field \(\F_q\) and let \(T\) be a finite set of finite places. For \(S=\{\infty\}\), we prove that \(\operatorname{Gal}(K(\ell,T,S)/K)\) is multiplicator-free if \(\ell\nmid q-1\) or \(T\ne\varnothing\); in particular, if \(T\ne\varnothing\), this holds for every prime \(\ell\). When \(\ell=p=\operatorname{char}K\) and \(T\ne\varnothing\), the same conclusion holds for every non-empty finite splitting set \(S\) disjoint from \(T\). In both cases, the Schur multiplier at every finite Galois level admits a central realization inside \(K(\ell,T,S)\).
\end{abstract}

\maketitle

\section{Introduction}

Let \(K\) be a global function field with full constant field
\(\mathbb F_q\). Fix a rational prime \(\ell\), a finite set \(T\) of finite places of \(K\), and a finite set \(S\) of places disjoint from
\(T\). Let \(K(\ell,T,S)\) denote the maximal pro-\(\ell\) extension of \(K\) which is unramified outside \(T\) and in which every place of \(S\) splits completely. Let
\[
\Omega_K(\ell,T,S)
:=
\operatorname{Gal}\bigl(K(\ell,T,S)/K\bigr).
\]

Class field theory determines the abelianization of
\(\Omega_K(\ell,T,S)\). Beyond this abelian description, the study of its non-abelian structure
naturally leads to group-theoretic invariants, among which the
Schur multiplier
\[
M\bigl(\Omega_K(\ell,T,S)\bigr)
:=
H_2\bigl(\Omega_K(\ell,T,S),\mathbb Z\bigr).
\]
Opolka showed that the triviality of the Schur multiplier of a
profinite group can be characterized in terms of the solvability of
suitable central embedding and lifting problems
\cite{Opolka_1993,Opolka_2010}. 
Central embedding problems play a fundamental role in the study of non-abelian Galois extensions. More general finite embedding problems over global fields, including problems with prescribed local behavior and controlled ramification; see, for example, \cite{Jarden_Ramiharimanana_2019}.
For number fields, Schur multipliers of restricted-ramification Galois
groups were studied by Watt and Ullom. Watt proved multiplicator
freeness for certain maximal pro-\(\ell\) extensions of imaginary
quadratic fields with restricted ramification \cite{Watt_1985}.
Subsequently, Ullom and Watt used multiplicator freeness to determine
generators and relations for certain class-two Galois groups
\cite{Ullom_Watt_1986}. 

Triviality of the Schur multiplier has also
played a role in the study of presentations of
restricted-ramification pro-\(p\) Galois groups
\cite{Boston_Perry_2000,Liu_Xing_2026}. In a different direction, the
non-abelian Cohen--Lenstra heuristics of Boston--Bush--Hajir show that multiplicator freeness
places strong restrictions on the distribution of unramified
\(p\)-class field tower groups \cite{Boston_Bush_Hajir_2017}.

For global function fields, Miyake showed that the Schur multiplier of a finite Galois group can be
realized through suitable central extensions and related this
construction to the Hasse norm obstruction
\cite{Miyake_1983}. Bae and Jung further developed this connection in
their study of the Hasse norm principle over global function fields
\cite{Bae_Jung_2001}. Thus, the Schur multiplier lies naturally at the
intersection of three themes: the structure of restricted-ramification
Galois groups, central embedding problems, and the Hasse norm
principle.

The purpose of the present paper is to prove the triviality of the
Schur multiplier for a natural class of restricted-ramification
pro-\(\ell\) Galois groups over global function fields. Our main result
is the following.

\begin{theorem}\label{thm:intro-main}
Let \(K\) be a global function field with full constant field
\(\mathbb F_q\), and let \(T\) be a finite set of finite places of \(K\).
Then the following hold.
\begin{enumerate}
\item If \(S=\{\infty\}\) and either \(\ell\nmid(q-1)\) or \(T\neq\varnothing\), then
\(\operatorname{Gal}(K(\ell,T,S)/K)\) is multiplicator free.
\item If \(p=\operatorname{char}K\), \(\ell=p\), and \(T\neq\varnothing\), then
\(\operatorname{Gal}(K(p,T,S)/K)\) is multiplicator free for every non-empty finite set \(S\) disjoint from \(T\).
\end{enumerate}
In both cases, for every finite Galois subextension
\[
L/K\subseteq K(\ell,T,S)/K,
\]
the Schur multiplier of \(\operatorname{Gal}(L/K)\) admits a central
realization inside \(K(\ell,T,S)\).
\end{theorem}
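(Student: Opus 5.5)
We will use the standard criterion, going back to Watt and Opolka, for a pro-\(\ell\) group \(G=\Omega_K(\ell,T,S)\) to be multiplicator free: \(H_2(G,\Z)=\varprojlim H_2(\Gal(L/K),\Z)\), where \(L\) runs over the finite Galois subextensions of \(K(\ell,T,S)/K\). Hence \(G\) is multiplicator free once, for every such \(L\), the inflation images of the finite multipliers die at a higher level. Concretely, we show the following. For each finite \(L\) with \(G_L=\Gal(L/K)\), let \(\widetilde L\) be the maximal central extension of \(L/K\) inside \(K(\ell,T,S)\) with kernel annihilated by \(\ell^n\), where \(\ell^n\) kills \(M(G_L)\). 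Then the transgression \(M(G_L)\to \Gal(\widetilde L/L)\cap[\Gal(\widetilde L/K),\Gal(\widetilde L/K)]\) is surjective. In Miyake's language, this says that the genus-type central extension realizing the multiplier lies in \(K(\ell,T,S)\). This is the ``central realization'' assertion, and passing to the limit gives \(H_2(G,\Z)=0\).

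Following Miyake and Bae--Jung, the central class field of \(L/K\) is described by idele class groups. Let \(C_L\) be the idele class group and \(U_{L,T,S}\) the subgroup cutting out \(K(\ell,T,S)\cap L^{\mathrm{ab}}\) (units outside \(T\), and full local groups at places above \(S\)). The Galois group of the maximal central subextension of \(L^{\mathrm{ab}}/K\) over the genus field is then identified, via the Tate cohomology exact sequence, with a quotient of \(\hat H^{-3}(G_L,\Z)\cong M(G_L)\) modulo the images of local terms \(\hat H^{-3}(G_{L_w},\Z)\) and of the unit groups. The key step is to check that, after restricting to the \((T,S)\)-part, the map
\[
\hat H^{-1}\bigl(G_L, U_{L,T,S}\bigr)\longrightarrow \hat H^{-1}(G_L,C_L)
\]
does not obstruct the realization. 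In other words, the relevant local contributions are absorbed, so the full \(M(G_L)\) injects into the \(\ell\)-part of the central kernel of the ray class field with the restricted ramification and splitting conditions.

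For part (1), with \(S=\{\infty\}\), the place \(\infty\) splits completely in \(L\), so its decomposition groups are trivial, and the unit group outside \(\infty\) is just \(\F_q^\times\) on constant fields. The obstruction term is then controlled by \(\hat H^{*}(G_L,\F_q^\times)\), which is an \(\ell\)-group only when \(\ell\mid q-1\). This disposes of the case \(\ell\nmid q-1\). When \(T\neq\varnothing\), allowing tame or wild ramification at a place \(v\in T\) lets the local unit group \(U_v\) kill the constant-unit contribution: the class of \(\F_q^\times\) becomes trivial in the ray class group modulo a suitable power of \(v\). For part (2), with \(\ell=p\), the local principal units \(U^{(1)}_v\) at \(v\in T\) are free \(\Z_p\)-modules of infinite rank, cohomologically trivial in the limit over conductors. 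The obstruction coming from the splitting places in \(S\) can then be compensated by choosing a large conductor at \(T\), using weak approximation. Since \(\F_q^\times\) has order prime to \(p\), constants cause no trouble here.

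The main obstacle is the comparison step: showing that the local cohomology at splitting places and at \(T\) genuinely surjects onto the part of \(M(G_L)\) that the Hasse-norm-type obstruction would otherwise detect, uniformly in \(L\). Our first attempt will be a Poitou--Tate-style argument with a sufficiently large modulus \(\mathfrak m\) supported on \(T\). We would compute \(\hat H^{-1}(G_L,C_L/U_{\mathfrak m})\) as a finite group and show that it surjects onto \(\hat H^{-3}(G_L,\Z)\) once \(\mathfrak m\) is large enough.
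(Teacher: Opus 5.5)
Your reduction is the same as the paper's. For each finite Galois $L\subset K(\ell,T,S)$ with $\Gamma=\Gal(L/K)$, you exhibit a central extension of $L/K$ inside $K(\ell,T,S)$ realizing $M(\Gamma)$, and conclude by factoring $g_F$ through $g_A$. One slip: as written you ask for $M(\Gamma)\to\Gal(\widetilde L/L)\cap[\Gal(\widetilde L/K),\Gal(\widetilde L/K)]$ to be \emph{surjective}. That holds for every central extension by the five-term sequence; what is needed is injectivity.

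The genuine gap is that the proposal stops exactly where the arithmetic begins. In your Tate-cohomological language, what must be shown is the following. Take a suitable subgroup $W\subset C_L$ encoding the conditions: units at places outside $T\cup S$, a small enough open subgroup of units above $T$, and all of $L_w^\times$ above $S$. Then the map $\hat H^{-1}(\Gamma,W)\to\hat H^{-1}(\Gamma,C_L)$ must be zero. Dually, by Fr\"ohlich's criterion, the $\Gamma$-invariant $\ell$-primary characters satisfying the $(T,S)$-conditions must already surject onto $X(H^{-1}(\Gamma,C_L))$. That statement is the whole theorem. You call it ``the main obstacle'' and offer only a plan for it (``a Poitou--Tate-style argument'').

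The paper proves it through the factorisation $X_\ell(C_L)^\Gamma=X_\ell(C_L)^\Gamma_{T,S}\cdot\Nhat_{L/K}(Y)$. Neither of its two ingredients appears in your sketch.

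\emph{Local norm descent.} At a place unramified in $L/K$, every $\Gamma$-invariant local character has the form $\chi_\frakp\circ N$ with $\chi_\frakp\in X_\ell(K_\frakp^\times)$, and $\chi_\frakp$ is unramified if the original character is. This follows from Hilbert 90 for the cyclic unramified local extension. It lets one correct $\phi$ by $\Nhat_{L/K}(\psi)\in\Ker r$ with $\psi$ a character of $C_K$, so only units of $K$, never units of $L$, have to be controlled.

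\emph{The unit obstruction.} The only obstruction to building $\psi$ is a character of $\ell$-power order on the relevant global units: $E_{K,\{\infty\}}=\F_q^\times$, or $E_{K,U}$ with $U=S\cup T$.
In the first case it vanishes when $\ell\nmid q-1$; this is the one case your sketch roughly captures.
When $T\neq\varnothing$, the obstruction is absorbed by a character at the places of $T$. This requires injectivity of $\widetilde{E_{K,U}}^\ell\to\prod_{t\in T}\widetilde{K_t^\times}^\ell$. For $S=\{\infty\}$ the paper proves this by the degree-zero divisor argument together with injectivity of constants into residue fields. For $\ell=p$ it uses $E_{K,U}\cap(K_t^\times)^p=E_{K,U}^p$, obtained from the criterion $da=0\iff a\in K^p$.

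Your heuristics do not supply this, and they partly point the wrong way.
\begin{enumerate}
\item Nontrivial constants do not become trivial in a ray class group modulo a power of $v$. They inject into $(\mathcal O_v/\mathfrak m)^\times$, and what helps is exactly that they are detected locally.
\item For part (2), neither weak approximation nor the $\Z_p$-module structure of $U_v^{(1)}$ gives the $p$-adic independence of global units at a single place of $T$.
\end{enumerate}
That independence is the real input, and it is precisely what breaks for $\ell\neq p$ when $|S|\ge 2$. In that case $E_{K,U}$ has rank $|S|+|T|-1>|T|$, so it cannot inject into $\prod_{t\in T}\widetilde{K_t^\times}^\ell$, whose $\Z_\ell$-rank is $|T|$.
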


The proof is based on a character-theoretic realization of the Schur
multiplier at finite levels of the tower. More precisely, let \(L/K\)
be a finite Galois subextension of \(K(\ell,T,S)/K\), and put
\(\Gamma=\operatorname{Gal}(L/K)\). We prove a factorization of the
form
\[
X_\ell(C_L)^\Gamma
=
X_\ell(C_L)_{T,S}^\Gamma
\cdot
\widehat N_{L/K}(Y),
\]
where \(X_\ell(C_L)_{T,S}^\Gamma\) consists of the
\(\Gamma\)-invariant \(\ell\)-primary idele class characters satisfying
the prescribed ramification and splitting conditions, and \(Y\) is a
suitable group of \(\ell\)-primary characters of \(C_K\). Class field
theory then yields a central realization inside \(K(\ell,T,S)\).

The theorem provides a natural starting point for further questions
concerning central embedding problems, Hasse norm obstructions, and
the structure of restricted-ramification Galois groups over
global function fields. In future studies, we will relate multiplicator freeness to these questions and develop some of its arithmetic consequences.

\section{Preliminaries}\label{sec:preliminaries}

In this section, we introduce the notation and recall the necessary facts from class field theory and the theory of Schur multiplicators.
\subsection{Schur multiplicators and multiplicator-free}

\begin{definition}
	Let \(G\) be a profinite group. We denote its Schur multiplicator (or Schur multiplier), \( H _{2}(G,\mathbb{Z})\), by \( M(G) \). We say that \(G\) is multiplicator free if \( M(G)=1\).
\end{definition}
The following characterization of multiplicator-free groups is well known (see \cite[Proposition 4.1]{Frohlich_1983}): The profinite group \(G\) is multiplicator-free if and only if for every open normal subgroup \(H\) of \(G\), the canonical surjection
\[
	M(G/H) \twoheadrightarrow \frac{H \cap [G,G]}{[H,G]}
\]
is an isomorphism, where \( [H,G]\) is the closed subgroup generated by all commutators \( [h,g]=h^{-1}g^{-1}hg\) with \( h \in H\) and \( g \in G\).

In this paper, we focus on the cases where \(G\) is a Galois group. For a Galois tower \(K \subset L \subset F \subset \overline{K}\) where \( \overline{K}\) is the algebraic closure of \(K\), let \( \Omega = \operatorname{Gal}(\overline{K} /K)\), \( G _{F} = \operatorname{Gal}(\overline{K} /F)\), \( G _{L} = \operatorname{Gal}(\overline{K} /L)\), and \( G = \operatorname{Gal}(L /K)\). By the basic property of Schur multiplier \cite[Proposition 3.1]{Frohlich_1983}, there exists two canonical surjective homomorphisms \( g\) and \( g _{F}\) such that the following diagram commutes:
\begin{figure}[htbp]
	\centering
	\begin{tikzcd}[row sep=0.4em, column sep=1.5em, nodes={scale=0.9, transform shape}]
		&&&& {\bar{K}} \\
		&& {\frac{G_L\cap [\Omega, \Omega]}{[G_L, \Omega]}} && \\
		&&&& F \\
		{M(G)} &&&& \\
		&&&& L \\
		&& {\frac{G_L/G_F \cap [\Omega/G_F, \Omega/G_F]}{[G_L/G_F,\Omega/ G_F]}} && \\
		&&&& K
		\arrow["{G_F}"', curve={height=12pt}, dashed, no head, from=1-5, to=3-5]
		\arrow["{G_L}", shift left, curve={height=-18pt}, dashed, no head, from=1-5, to=5-5]
		\arrow[two heads, from=2-3, to=6-3]
		\arrow[no head, from=3-5, to=1-5]
		\arrow["g", two heads, from=4-1, to=2-3]
		\arrow["{g_F}"', two heads, from=4-1, to=6-3]
		\arrow[no head, from=5-5, to=3-5]
		\arrow["G"', shift left, curve={height=12pt}, dashed, no head, from=5-5, to=7-5]
		\arrow[no head, from=7-5, to=5-5]
	\end{tikzcd}
	\caption{Natural Schur multiplicator maps associated with the tower \(K\subset L\subset F\subset\bar K\).}
	\label{fig:schur-multiplicator-maps}
\end{figure}
Moreover, by \cite[Section 6.5, Theorem 4]{Serre_1975}, the homomorphism \(g\) is an isomorphism. If we see \(G\) as the quotient group \( \Omega /G_L\), the homomorphism \(g\) is isomorphic for any extension \( L /K\) which is equivalent to \( M(\Omega) =1\). When \( g _{F}\) is also an isomorphism, we say that the extension \( F\) realizes \( M(G)\).

Actually, for any finite Galois extension of global function fields \(L/K\) with Galois group \(G\), one can choose a finite central extension that realizes \(M(G)\).

\begin{definition}
	Let \(L/K\) be a Galois extension of global function fields and let \(F/K\) be a Galois extension containing \(L\). We say that \(F\) is a central extension of \(L/K\) if \(\operatorname{Gal}(F/L)\) is contained in the center of \(\operatorname{Gal}(F/K)\).
\end{definition}

\begin{proposition}\label{central-existence}
	Let \(L/K\) be a finite Galois extension of global function fields with Galois group \(G\). Then there exists a finite central extension \(F\) of \(L/K\) such that \(F\) realizes \(M(G)\). Furthermore, if \(G\) is an \(\ell\)-group and \(F\) is a finite central extension of \(L/K\) realizing \(M(G)\), then the maximal \(\ell\)-extension of \(K\) contained in \(F\) also realizes \(M(G)\).
\end{proposition}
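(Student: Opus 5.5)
The plan is to work in the finite quotient \(\Omega/G_F\) and use the commutative diagram of \cref{fig:schur-multiplicator-maps}. Since \(g\) is an isomorphism by \cite[Section 6.5, Theorem 4]{Serre_1975} and \(M(G)\) is finite, the kernel of \(M(G)\twoheadrightarrow \frac{G_L\cap[\Omega,\Omega]}{[G_L,\Omega]}\) is trivial. The target is therefore a finite group, and I will choose \(F\) to be the fixed field of a suitable open subgroup. Concretely, I take \(F\) to be the fixed field of \([G_L,\Omega]\cdot U\), where \(U\subseteq G_L\) is an open normal subgroup of \(\Omega\) that meets \(G_L\cap[\Omega,\Omega]\) only inside \([G_L,\Omega]\).

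For the existence of \(U\), let \(A:=G_L/[G_L,\Omega]\). This is an abelian profinite group on which \(\Omega\) acts trivially, so it is central in \(\Omega/[G_L,\Omega]\). Its subgroup \(B:=(G_L\cap[\Omega,\Omega])/[G_L,\Omega]\) is finite, being isomorphic to \(M(G)\). In a profinite abelian group a finite subgroup can be separated from the identity by an open subgroup, so there is an open subgroup \(V\subseteq A\) with \(V\cap B=1\). Every subgroup of \(A\) is normal in \(\Omega/[G_L,\Omega]\) by centrality. Let \(U\) be the preimage of \(V\) in \(G_L\); it is open and normal in \(\Omega\), and \(F:=\overline K^{U}\) is finite over \(K\).

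Next I verify the two required properties of \(F\). First, \(\operatorname{Gal}(F/L)=G_L/U\) is a quotient of \(A\), so it is central in \(\operatorname{Gal}(F/K)\); hence \(F\) is a central extension of \(L/K\). Second, for realization I compute the bottom term in the diagram. Its numerator is \((G_L\cap[\Omega,\Omega]U)/U\), which equals \((G_L\cap[\Omega,\Omega])U/U\) because \(U\subseteq G_L\). Its denominator is \([G_L,\Omega]U/U=U/U\), since \([G_L,\Omega]\subseteq U\). Hence the bottom term is \((G_L\cap[\Omega,\Omega])U/U\cong B/(B\cap V)=B\). Combined with the fact that \(g\) is an isomorphism, this shows \(g_F\) is an isomorphism.

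For the \(\ell\)-part, let \(F_\ell\) be the maximal \(\ell\)-extension of \(K\) contained in \(F\). I need \(L\subseteq F_\ell\), where \(F_\ell\) is the compositum of the subextensions of \(F\) that are \(\ell\)-extensions of \(K\); this holds because \(G\) is an \(\ell\)-group. The group \(\operatorname{Gal}(F/L)\) is finite abelian, so it decomposes as \(P\times P'\) with \(P\) its \(\ell\)-part. The subgroup \(P'\) is characteristic and central, so it is normal in \(\operatorname{Gal}(F/K)\), and \(\operatorname{Gal}(F/K)/P'\) has order \(|G|\cdot|P|\), an \(\ell\)-power. By the Schur--Zassenhaus argument in a central extension, every normal subgroup with \(\ell\)-group quotient contains \(P'\), so \(F_\ell=F^{P'}\). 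Realization then reduces to showing that \(B\), an \(\ell\)-group since \(M(G)\) is \(\ell\)-primary, injects modulo \(P'\). This holds because \(B\cap P'\) is both an \(\ell\)-group and an \(\ell'\)-group, so it is trivial; equivalently, the image of \(B\) in \(\operatorname{Gal}(F/L)\) lies in \(P\).

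I expect the main obstacle to be the bookkeeping in identifying the quotient computations under passage from \(\Omega\) to \(\Omega/G_F\). In particular, I need to check that the image of \([\Omega,\Omega]\) in \(\operatorname{Gal}(F/K)\) is exactly the commutator subgroup of \(\operatorname{Gal}(F/K)\), which holds since images of closed commutator subgroups under quotient maps of profinite groups are the commutator subgroups. Note also that nothing specific to function fields is used: the argument only needs that \(M(G)\) is finite and that \(g\) is an isomorphism.
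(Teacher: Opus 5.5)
Your proof is correct, but it takes a different route from the paper's.

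\textbf{Existence.} The paper imports Miyake's Theorem~5, which supplies a finite central extension \(F\) with \(\operatorname{Gal}(F/F\cap(K^{\mathrm{ab}}L))\simeq H^2(G,\mathbb Q/\mathbb Z)^*\). It identifies this group with the target of \(g_F\), notes that \(g_F\) is surjective, and concludes by comparing orders. You avoid Miyake entirely. You use only the fact that \(g\) is an isomorphism (the theorem quoted from Serre just before the proposition). You then choose an open subgroup \(V\) of \(G_L/[G_L,\Omega]\) that meets the finite central subgroup \(B=(G_L\cap[\Omega,\Omega])/[G_L,\Omega]\) trivially, and take \(F\) to be the fixed field of its preimage. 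The modular-law computation shows that the natural map from the target of \(g\) to that of \(g_F\) is injective, so \(g_F\) is an isomorphism.

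\textbf{What each route buys.} Your version is self-contained and shows the existence statement is purely formal for any field with \(M(\Omega)=1\); as you observe, nothing specific to function fields enters. The paper's route yields a particular arithmetically constructed \(F\), tied to Miyake's work on the Hasse norm principle. It also needs only surjectivity of \(g_F\) plus a count, rather than injectivity of \(g\).

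\textbf{The \(\ell\)-part.} The paper only cites Fr\"ohlich's Proposition~3.2 here. Your explicit argument is what ``taking the \(\ell\)-primary part'' amounts to: \(F_\ell=F^{P'}\), and the \(\ell\)-group \(\operatorname{Gal}(F/L)\cap[\operatorname{Gal}(F/K),\operatorname{Gal}(F/K)]\cong M(G)\) meets \(P'\) trivially. Since \(F\) is arbitrary there, your \(B\) should be read as this image in \(\operatorname{Gal}(F/L)\). Schur--Zassenhaus is not needed: for any normal \(N\) with \(\ell\)-group quotient, \(P'N/N\) is an \(\ell'\)-subgroup of an \(\ell\)-group, hence trivial.
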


\begin{proof}
	Let \(K^{\mathrm{ab}}\) denote the maximal abelian extension of \(K\) contained in \(\overline K\). By \cite[Theorem~5]{Miyake_1983}, there exists a finite central extension \(F\) of \(L/K\) such that
	\[
		\operatorname{Gal}\bigl(F/F\cap(K^{\mathrm{ab}}L)\bigr)
		\simeq H^2(G,\mathbb Q/\mathbb Z)^*.
	\]
	Since \(H^2(G,\mathbb Q/\mathbb Z)^*\simeq M(G)\), the two finite groups have the same order.
	On the other hand, since \(F\) is a central extension of \(L/K\), the canonical map
	\[
		g_F:M(G)\twoheadrightarrow
		\operatorname{Gal}\bigl(F/F\cap(K^{\mathrm{ab}}L)\bigr)
	\]
	is surjective.  Hence \(g_F\) is an isomorphism, and \(F\) realizes \(M(G)\).

	The final assertion follows by taking the \(\ell\)-primary part as in \cite[Proposition~3.2]{Frohlich_1983}.
\end{proof}

Even though the above proposition showed that the extension of the Schur multiplicator can be realized by a central extension, it is still not clear how to determine whether a given central extension realizes the Schur multiplicator. But the Pontryagin dual can support a more explicit criterion, which will be used in the proof of the main theorem.

\subsection{Restricted ramification extensions and character groups}

\subsubsection{Restricted ramification }
First, we give some notation for the local and global fields, their unit groups, and idele class groups.
We denoted the completion of \(K\) at \(\frakp\) by \(K_\frakp\), and its unit group by \(U_\frakp:=\mathcal O_\frakp^\times\subset K_\frakp^\times\).

Now, let \(S\) be a finite subset of places of \(K\).
Let
\(
E_{K,S}\defeq
\{a\in K^\times: v _{\mathfrak{p}}(a) =  0 \text{ for every } \mathfrak{p} \notin S\}
\)
be the group of global \(S\)-units.
The \(S\)-idele group of \(K\) is the restricted direct product
\[
	J_K ^{S}	:=\prod_{\frakp\notin S}U_\frakp\prod_{\frakp\in S}K_\frakp^* \triangleq U_f(K)\times K_S^*
\]
of the idele group. Then \(J _{K} ^{S} \cap K ^{\times } = E _{K,S}\). The \(S\)-idele class group of \(K\) is the quotient
\[
	C_{S,K}
	:=\frac{U_f(K)\times K_S^*\times K^\times}{K^\times}
	\simeq
	\frac{U_f(K)\times K_S^*}{E_{K,S}}.
\]
The usual idele class group of \(K\) is
\(
C_K:=J_K/K^\times,
\)
where \(J_K\) is the idele group of \(K\).  The quotient \(C_K/C_{S,K}\) is the \(S\)-ideal class group, denoted by
\(
\Cl_{K,S}.
\)
We have the following classical exact sequence (cf. \cite{Neukirch_2008}):
\begin{equation}\label{eq:S-class-exact}
	1\longrightarrow E_{K,S}
	\longrightarrow U_f(K)\times K_S^*
	\longrightarrow C_K
	\longrightarrow \Cl_{K,S}
	\longrightarrow 1.
\end{equation}

\subsubsection{Character groups}
For a group \(G\), we denote by
\(
X(G)
\)
the torsion subgroup of the Pontryagin dual of \( G\) and \[
	X_\ell(G):=
	\{\phi\in X(G):\phi^{\ell^n}=1\text{ for some }n\geq 0\}
\]
the \(\ell\)-primary part of \(X(G)\).

We are interested in the characters of the idele class group, which can also be viewed as the characters of the idele group trivial on the multiplicative group of the field.

Let \(F\) be a global function field with full constant field \(\F _{q}\), and \(\mathfrak{p}\) be a place of \(F\). Let \(\phi\) be a character of \(C_F\). Its local component \(\phi _{\mathfrak{p}}\) at a prime \(\mathfrak{p}\) of \(F\) is its restriction to \(F _{\mathfrak{p}} ^{\times }\).

\begin{definition}
	A character \(\phi \in X(C_F)\) (or \(X _{\ell}(C _{F})\)) is said to be ramified at \(\mathfrak{p}\) if the local component \(\phi_\frakp\) of \(\phi\) at \(\frakp\) is non-trivial on \(U_\frakp\).  Otherwise, \(\phi\) is unramified at \(\frakp\). Moreover, if \(\phi _{\mathfrak{p}}\) is trivial on \(F _{\mathfrak{p}} ^{\times }\), we say that \(\phi\) is split completely at \(\mathfrak{p}\).
\end{definition}

The torsion subgroup \(X(C_F)\) has an important relationship with the abelian extensions of \(F\).
Specifically, since the completion of \(C_F\) with profinite topology homeomorphisms to the Galois group \(\operatorname{Gal}(F ^{\text{ab}} /F)\), where \(F ^{\text{ab}}\) is the maximal abelian extension of \(F\). The torsion subgroup of the Pontryagin dual of \(C _{F}\), combined with this homeomorphism, gives a bijection between the subgroup \(X(C_F)\) and the set of finite abelian extensions of \(F\). More details can be found in \cite[Chapter 1]{Frohlich_1983}.

Let \(L /K\) be a finite Galois extension of global function fields with Galois group \(\Gamma\).
The Galois group \(\Gamma\) has a canonical action on \(C_L\) (cf. \cite{Lang1994Galois}), which induces an Galois action on \(X(C_L)\), by
\[
	(\gamma\phi)(a)=\phi(\gamma^{-1}a),
	\quad
	\text{where }
	\gamma\in\Gamma,
	\phi\in X(C_L),
	a\in C_L.
\]
We write \(X(C_L)^\Gamma\) and \(X_\ell(C_L)^\Gamma\) to denote the subgroups of \(\Gamma\)-invariant characters.
Let \(M/L\) be a finite abelian extension. By class field theory, \(M  /L\) associates to it the finite character group
\[
	\Phi(M/L) := \{\chi \in X(C_L) : \chi|_{N_{M/L}C_M} = 1\} \cong X(C_L/N_{M/L}C_M),
\]
where
\(
N_{M/L}: C_M \longrightarrow C_L
\)
is the idele class norm map. For the finite Galois extension \(L/K\), we have the following standard exact sequence (cf. \cite[equation 3.11]{Frohlich_1983}):
\begin{equation}\label{eq:cohomological-exact}
	1\longrightarrow X(C_K/N_{L/K}C_L)
	\longrightarrow X(C_K)
	\xrightarrow{\Nhat_{L/K}}
	X(C_L)^\Gamma
	\xrightarrow{r}
	X\bigl(H^{-1}(\Gamma,C_L)\bigr)
	\longrightarrow 1,
\end{equation}
where \(\Nhat_{L/K}(\psi)=\psi\circ N_{L/K}\). For \(\phi\in X(C_L)^\Gamma\), the restriction of \(\phi\) to \(\Ker(N_{L/K})\) is trivial on the subgroup generated by the elements \(\gamma a/a\), with \(\gamma\in\Gamma\) and \(a\in C_L\), and hence induces a character of
\[
	H^{-1}(\Gamma,C_L)
	=\Ker(N_{L/K})/\langle \gamma a/a:\gamma\in\Gamma,\ a\in C_L\rangle.
\]
We denote this induced character by \(r(\phi)\). In particular, exactness gives
\[
	\operatorname{Im}(\Nhat_{L/K})=\Ker(r).
\]

We keep the above notation. The following proposition gives a criterion for an abelian extension to be central over the base field \(K\), and whether a central extension realises the Schur multiplicator.
\begin{proposition}\cite[Proposition 3.4]{Frohlich_1983}\label{central-criterion}
	Let $M/K$ be a Galois extension. Then:
	\begin{enumerate}
		\item $M$ is a central extension of $L/K$ if and only if $\Phi(M/L) \subseteq X(C_L)^\Gamma$.
		\item $M$ realizes $M(\Gamma)$ if and only if $r\bigl(\Phi(M/L)\bigr) = X\bigl(H^{-1}(\Gamma,C_L)\bigr)$.
	\end{enumerate}
\end{proposition}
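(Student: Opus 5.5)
The first part of \cref{central-criterion} comes from the Artin reciprocity isomorphism for the abelian extension \(M/L\) (we assume \(M\supseteq L\) and \(M/L\) abelian, as is implicit in the notation \(\Phi(M/L)\)). The second part comes from Tate's isomorphism \(H^{-1}(\Gamma,C_L)\simeq \widehat H^{-3}(\Gamma,\Z)=H_2(\Gamma,\Z)=M(\Gamma)\), followed by Pontryagin duality.

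For (1), I would argue as follows. Since \(M/K\) is Galois, the subgroup \(N_{M/L}C_M\subseteq C_L\) is \(\Gamma\)-stable. The reciprocity map
\[
\theta_{M/L}:C_L/N_{M/L}C_M\xrightarrow{\ \sim\ }\operatorname{Gal}(M/L)
\]
is \(\Gamma\)-equivariant: the action of \(\gamma\in\Gamma\) on \(C_L\) corresponds to conjugation by any lift \(\tilde\gamma\in\operatorname{Gal}(M/K)\). This is the standard functoriality \(\theta(\sigma a)=\tilde\sigma\,\theta(a)\,\tilde\sigma^{-1}\). Hence \(\operatorname{Gal}(M/L)\) is central in \(\operatorname{Gal}(M/K)\) iff \(\Gamma\) acts trivially on the finite group \(C_L/N_{M/L}C_M\). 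By duality this holds iff every character of that quotient is \(\Gamma\)-invariant, that is, iff \(\Phi(M/L)\subseteq X(C_L)^\Gamma\).

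For (2), assume \(M\) is central. Put \(G_M=\operatorname{Gal}(M/K)\) and \(A=\operatorname{Gal}(M/L)\). Then \([A,G_M]=1\), so the target of \(g_M\) is \(A\cap[G_M,G_M]\), and \(g_M\) is surjective. I would identify this target arithmetically. Reciprocity for \(M/K\) gives \(G_M^{\mathrm{ab}}\simeq C_K/N_{M/K}C_M\), and the inclusion \(A\hookrightarrow G_M\to G_M^{\mathrm{ab}}\) corresponds to the norm \(N_{L/K}:C_L/N_{M/L}C_M\to C_K/N_{M/K}C_M\). It follows that
\[
A\cap[G_M,G_M]\;\simeq\;\operatorname{Ker}\bigl(N_{L/K}\bigr)\big/\bigl(\operatorname{Ker}(N_{L/K})\cap N_{M/L}C_M\bigr).
\]
This is exactly the image of \(\operatorname{Ker}(N_{L/K}:C_L\to C_K)\) in \(C_L/N_{M/L}C_M\). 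The augmentation subgroup \(\langle\gamma a/a\rangle\) maps to \(1\) there, by centrality from part (1). Hence there is a surjection
\[
\rho_M:H^{-1}(\Gamma,C_L)\twoheadrightarrow A\cap[G_M,G_M].
\]
By Tate's theorem, \(H^{-1}(\Gamma,C_L)\simeq M(\Gamma)\) (cup product with the fundamental class), and these are finite groups of equal order. So \(g_M\) is an isomorphism iff \(\rho_M\) is injective. Dually, this holds iff the induced map on characters,
\[
X\bigl(C_L/N_{M/L}C_M\bigr)=\Phi(M/L)\longrightarrow X\bigl(H^{-1}(\Gamma,C_L)\bigr),
\]
is surjective. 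But this map is precisely \(r\) restricted to \(\Phi(M/L)\), giving the criterion \(r(\Phi(M/L))=X(H^{-1}(\Gamma,C_L))\).

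The main obstacle is the compatibility step. One must check that, under Tate's isomorphism \(M(\Gamma)\simeq H^{-1}(\Gamma,C_L)\), the map \(\rho_M\) coincides with the group-theoretic map \(g_M\), at least up to an automorphism of \(M(\Gamma)\). Only then does injectivity of \(\rho_M\) correspond to \(g_M\) being an isomorphism. I would first try to reduce this to the case \(M\subseteq\) a large central extension \(F\) as in \cref{central-existence}, where both maps are surjective between groups of the same order. Then I would use naturality of Tate's isomorphism with respect to the Weil group \(W_{L/K}\), namely the extension \(1\to C_L\to W\to\Gamma\to1\) attached to the fundamental class. Its pushout along \(C_L\to A\) is \(G_M\), and the Hopf-formula description of \(g_M\) is functorial for such pushouts.
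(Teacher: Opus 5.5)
Your proof is correct. The paper gives no argument of its own here; it only cites Fröhlich [Proposition 3.4]. Your route is the standard one: equivariance of the reciprocity map \(C_L/N_{M/L}C_M\simeq\operatorname{Gal}(M/L)\) for (1), and for (2) reciprocity for \(M/K\), the identification of \(A\cap[G_M,G_M]\) with the image of \(\operatorname{Ker}(N_{L/K})\), Tate's theorem, and Pontryagin duality.

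The compatibility step you single out as the main obstacle in your last paragraph is not needed. Your sentence saying that injectivity of \(\rho_M\) corresponds to \(g_M\) being an isomorphism only after that check is false. Both maps
\[
g_M\colon M(\Gamma)\twoheadrightarrow A\cap[G_M,G_M],\qquad
\rho_M\colon H^{-1}(\Gamma,C_L)\twoheadrightarrow A\cap[G_M,G_M]
\]
are surjections onto the same finite group. Their sources are finite groups of the same order, since \(|M(\Gamma)|=|\widehat H^{-3}(\Gamma,\Z)|=|H^{-1}(\Gamma,C_L)|\). So each map is injective exactly when \(|A\cap[G_M,G_M]|=|M(\Gamma)|\), and therefore they are injective simultaneously. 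You actually stated this counting argument correctly one sentence earlier. Tate's theorem enters only through this equality of orders, so the reduction to a large central extension and the Weil-group and pushout naturality can be dropped entirely.

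The dualization step also deserves one explicit line. The map \(r|_{\Phi(M/L)}\) is the restriction \(X(C_L/N_{M/L}C_M)\to X(Q)\), where \(Q\) is the image of \(\operatorname{Ker}(N_{L/K})\), followed by the dual of \(\rho_M\). The first map is surjective because characters of a finite abelian group extend from subgroups. Hence \(r(\Phi(M/L))=X(H^{-1}(\Gamma,C_L))\) if and only if \(\rho_M\) is injective.

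Finally, assuming \(M\) central in (2) is legitimate. The map \(r\) is defined only on \(X(C_L)^\Gamma\), so the right-hand side already presupposes the condition in (1). The paper also applies (2) only to central extensions.
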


\subsection{A finite subgroup replacement lemma}

The following elementary observation is used to replace an arbitrary finite character group by one satisfying the desired ramification restrictions.

\begin{lemma}\label{lem:finite-replacement}\cite[Lemma 2.9]{Watt_1985}
	Let \(G\) be a torsion abelian group and suppose that
	\(
	G=G_1G_2.
	\)
	For a group \(\overline{G}\), let \(\rho:G\to \overline G\) be a homomorphism with \(G_2\subset\Ker(\rho)\).  If \(H\subset G\) is a finite subgroup such that \(\rho(H)=\overline G\), then there exists a finite subgroup
	\(
	H_1\subset G_1
	\)
	such that
	\(
	\rho(H_1)=\overline G.
	\)
\end{lemma}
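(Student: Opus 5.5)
We prove the statement of \cref{lem:finite-replacement} directly by finite bookkeeping, using only that \(G\) is torsion abelian and \(G=G_1G_2\).

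First, since \(H\) is finite, choose generators \(h_1,\dots,h_k\) of \(H\). Each \(h_i\) lies in \(G=G_1G_2\), so we can write \(h_i=a_ib_i\) with \(a_i\in G_1\) and \(b_i\in G_2\).

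Let \(H_1\) be the subgroup of \(G_1\) generated by \(a_1,\dots,a_k\). Because \(G\) is abelian and torsion, each \(a_i\) has finite order. Hence \(H_1\) is a finitely generated abelian torsion group, and therefore finite. Here we use only that \(G_1\) is a subgroup of \(G\), so that \(H_1\subset G_1\).

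Next we compare images under \(\rho\). Since \(G_2\subset\Ker(\rho)\), we get
\[
\rho(a_i)=\rho(h_i)\rho(b_i)^{-1}=\rho(h_i)
\]
for every \(i\). The images \(\rho(h_i)\) generate \(\rho(H)=\overline G\). Since \(\rho(H_1)\) is generated by the \(\rho(a_i)=\rho(h_i)\), it follows that \(\rho(H_1)=\rho(H)=\overline G\), which is the required conclusion.

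None of these steps is a real obstacle. The only points needing care are the following.
\begin{itemize}
\item Commutativity is needed for \(H_1\) to be finite: a finitely generated torsion abelian group is finite.
\item \(\overline G\) need not be abelian a priori, but it equals \(\rho(H)\) and so is abelian automatically.
\item \(G_1\) and \(G_2\) are only assumed to be subgroups.
\end{itemize}

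In the paper this lemma will be applied with \(G=X_\ell(C_L)^\Gamma\), \(G_1=X_\ell(C_L)^\Gamma_{T,S}\) and \(G_2=\Nhat_{L/K}(Y)\). Take \(\rho=r\), which kills \(\operatorname{Im}\Nhat_{L/K}\). Then a finite \(H\) with \(r(H)=X(H^{-1}(\Gamma,C_L))\) can be replaced by one satisfying the ramification and splitting conditions. Via \cref{central-criterion}, this yields a central realization inside \(K(\ell,T,S)\).
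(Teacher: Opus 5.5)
Your proof is correct and complete. Writing each generator as \(h_i=a_ib_i\) and setting \(H_1=\langle a_1,\dots,a_k\rangle\) gives a finitely generated torsion abelian, hence finite, subgroup of \(G_1\) with \(\rho(a_i)=\rho(h_i)\), so \(\rho(H_1)=\rho(H)=\overline G\).

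The paper gives no proof of its own. It simply cites \cite[Lemma 2.9]{Watt_1985}, and your direct argument is the natural elementary proof of that cited fact.
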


We shall use the standard extension theorem for characters of locally compact abelian groups; see \cite[Corollary of Theorem~27]{Morris_1977}.

\begin{lemma}[Character extension]\label{lem:character-extension}
	Let \(A\) be a locally compact abelian group and let \(B\subset A\) be a closed subgroup. Then every continuous character of \(B\) extends to a continuous character of \(A\).
\end{lemma}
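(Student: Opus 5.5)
The statement to prove is the character extension lemma (\cref{lem:character-extension}): for a locally compact abelian group \(A\) and a closed subgroup \(B\), every continuous character of \(B\) extends to a continuous character of \(A\). I will deduce it from Pontryagin duality, working with the dual group \(\widehat A\) of continuous homomorphisms \(A\to\mathbb T\) and the canonical isomorphism \(A\simeq\widehat{\widehat A}\).

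The first step is to form the annihilator
\[
B^\perp=\{\chi\in\widehat A:\chi|_B=1\},
\]
which is a closed subgroup of \(\widehat A\). Restriction gives a continuous homomorphism
\[
\mathrm{res}\colon \widehat A\longrightarrow \widehat B
\]
with kernel \(B^\perp\). The lemma is exactly the surjectivity of \(\mathrm{res}\).

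The second step uses the duality theorem for closed subgroups. Since \(B\) is closed, the quotient \(A/B\) is again locally compact abelian. Its dual is \(\widehat{A/B}\simeq B^\perp\), and the annihilator of \(B^\perp\) in \(A\simeq\widehat{\widehat A}\) is \(B\) itself. Now dualize the strict exact sequence \(0\to B\to A\to A/B\to 0\). By the exactness of Pontryagin duality on short exact sequences of locally compact abelian groups with closed embeddings and open quotient maps, we get an exact sequence
\[
0\to B^\perp\to\widehat A\to\widehat B\to 0.
\]
In particular \(\mathrm{res}\) is surjective, which is the claim.

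The main obstacle is the surjectivity itself, that is, showing the image of \(\mathrm{res}\) is all of \(\widehat B\). I would argue as follows.
\begin{enumerate}
\item Let \(I=\mathrm{res}(\widehat A)\subseteq\widehat B\).
\item The annihilator of \(I\) in \(B\simeq\widehat{\widehat B}\) consists of those \(b\in B\) killed by every character of \(A\). Because characters of \(A\) separate points (Gelfand--Raikov / Peter--Weyl for locally compact abelian groups), this annihilator is trivial, so \(I\) is dense in \(\widehat B\).
\item \(I\) is identified with \(\widehat A/B^\perp\) via the continuous bijection \(\widehat A/B^\perp\to I\). One then checks that this map is a topological isomorphism onto \(I\), using \(\widehat A/B^\perp\simeq\widehat B\) from the duality for subgroups.
\item A locally compact subgroup of a Hausdorff topological group is closed. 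Hence \(I\) is closed and dense, so \(I=\widehat B\).
\end{enumerate}

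Since the paper treats this lemma as standard, the cleanest route is to cite the duality theorem for closed subgroups (e.g. \cite{Morris_1977}) and present the argument above as a short deduction from it.
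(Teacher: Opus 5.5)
Your proposal is correct and takes essentially the paper's route: the paper gives no argument of its own and only cites \cite{Morris_1977} (Corollary of Theorem~27), a consequence of the duality theorem for closed subgroups, and that theorem's statement that \(\widehat A/B^\perp\to\widehat B\), \(\chi B^\perp\mapsto\chi|_B\), is an isomorphism is exactly the surjectivity you need. One caution: your four-step sketch of the ``main obstacle'' is circular at step~3, because invoking \(\widehat A/B^\perp\simeq\widehat B\) via restriction already gives \(I=\widehat B\) (and an abstract isomorphism alone would not show the specific map is a homeomorphism onto \(I\)), so steps 2 and 4 add nothing beyond the citation. If you want a genuinely independent argument, apply \(\widehat{X/Y}\simeq Y^\perp\) to \(X=\widehat A\), \(Y=B^\perp\), and combine it with \((B^\perp)^\perp=B\); this shows that restriction is the dual of the topological isomorphism \(B\to\widehat{\widehat A/B^\perp}\).
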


\section{Local descent of invariant characters}\label{sec:local}

We first prove the local statement used to construct the auxiliary global character on \(K\).

\begin{proposition}[Local norm descent]
	\label{prop:local-norm-descent}

	Let \(L/K\) be a finite Galois extension of global function fields with Galois group \(\Gamma\), and let
	\(\mathfrak p\) be a place of \(K\) which is unramified in \(L/K\). Then the homomorphism
	\[
		\begin{aligned}
			X_{\ell}\bigl(K_{\mathfrak p}^{\times}\bigr)
			 & \longrightarrow
			X_{\ell}\left(
			\prod_{\mathfrak P\mid\mathfrak p}
			L_{\mathfrak P}^{\times}
			\right)^{\Gamma},  \\
			\chi_{\mathfrak p}
			 & \longmapsto
			\left(
			\chi_{\mathfrak p}\circ
			N_{L_{\mathfrak P}/K_{\mathfrak p}}
			\right)_{\mathfrak P\mid\mathfrak p}
		\end{aligned}
	\]
	is surjective.
	Moreover, if
	\(
	\phi\in
	X_{\ell}\left(
	\prod_{\mathfrak P\mid\mathfrak p}
	L_{\mathfrak P}^{\times}
	\right)^{\Gamma}
	\)
	is unramified at every place \(\mathfrak P\mid\mathfrak p\), then one may choose
	\(\chi_{\mathfrak p}\in X_{\ell}(K_{\mathfrak p}^{\times})\)
	to be unramified as well.

\end{proposition}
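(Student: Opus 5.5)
Fix a place \(\mathfrak P_0\mid\mathfrak p\) and let \(D=\Gamma_{\mathfrak P_0}\) be its decomposition group. Since \(\mathfrak p\) is unramified, \(D\) is cyclic, generated by the Frobenius, and \(\Gamma\) permutes the places above \(\mathfrak p\) transitively. The plan is to reduce the global-orbit statement to a statement about the single cyclic unramified local extension \(L_{\mathfrak P_0}/K_{\mathfrak p}\), and then to apply local class field theory there.

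\textbf{Step 1 (reduction to one place).} Write \(\prod_{\mathfrak P\mid\mathfrak p}L_{\mathfrak P}^\times\) as the induced module \(\mathrm{Ind}_D^\Gamma L_{\mathfrak P_0}^\times\). Projection to the \(\mathfrak P_0\)-component identifies
\[
X_\ell\Bigl(\prod_{\mathfrak P\mid\mathfrak p}L_{\mathfrak P}^\times\Bigr)^{\Gamma}\;\cong\;X_\ell(L_{\mathfrak P_0}^\times)^{D}.
\]
Indeed, a \(\Gamma\)-invariant \(\phi=(\phi_{\mathfrak P})\) is determined by \(\phi_{\mathfrak P_0}\) via \(\phi_{\gamma\mathfrak P_0}=\phi_{\mathfrak P_0}\circ\gamma^{-1}\). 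The component \(\phi_{\mathfrak P_0}\) is \(D\)-invariant, and conversely any \(D\)-invariant character of \(L_{\mathfrak P_0}^\times\) defines a well-defined invariant tuple. Since \(N_{L_{\gamma\mathfrak P_0}/K_{\mathfrak p}}\circ\gamma=N_{L_{\mathfrak P_0}/K_{\mathfrak p}}\), the map in the statement corresponds to \(\chi\mapsto\chi\circ N_{L_{\mathfrak P_0}/K_{\mathfrak p}}\).

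\textbf{Step 2 (local surjectivity).} Put \(E=L_{\mathfrak P_0}\), \(k=K_{\mathfrak p}\) and \(D=\mathrm{Gal}(E/k)\), which is cyclic of order \(f\). The local analogue of \eqref{eq:cohomological-exact} reads
\[
X(k^\times)\xrightarrow{\;\chi\mapsto\chi\circ N\;}X(E^\times)^D\to X\bigl(\hat H^{-1}(D,E^\times)\bigr).
\]
Since \(D\) is cyclic, \(\hat H^{-1}(D,E^\times)\cong \hat H^{1}(D,E^\times)=0\) by Hilbert 90. Concretely, a \(D\)-invariant character kills \(\{\sigma a/a\}\), which equals \(\ker N\), so it factors through \(E^\times/\ker N\cong N(E^\times)\). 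Here \(N(E^\times)\) is open of finite index in \(k^\times\), hence closed.

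By Lemma~\ref{lem:character-extension}, the induced character on \(N(E^\times)\) extends to a continuous character \(\chi\) of \(k^\times\). Two points then need checking:
\begin{itemize}
\item[(a)] \(\chi\) can be taken of finite order. Since \(N E^\times\) has finite index, \(\chi^{[k^\times:NE^\times]}\) is determined by \(\phi\), so \(\chi\) has finite order.
\item[(b)] \(\chi\) can be taken \(\ell\)-primary. Decompose \(\chi=\chi_\ell\chi'\) into its \(\ell\)-primary and prime-to-\(\ell\) parts. Since \(\chi\circ N=\phi\) is \(\ell\)-primary, we get \(\chi'\circ N=1\) and hence \(\chi_\ell\circ N=\phi\).
\end{itemize}
This proves the surjectivity.

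\textbf{Step 3 (unramified case), which I expect to be the main subtle point.} Suppose \(\phi\) is trivial on \(U_E\). For an unramified extension, \(N(U_E)=U_k\), so the induced character on \(N E^\times=U_k\times\langle\pi^f\rangle\) is trivial on \(U_k\). It is therefore a character of \(\langle\pi^f\rangle\cong\mathbb Z\), given by \(\pi^f\mapsto\zeta\) with \(\zeta\) an \(\ell\)-power root of unity. Choose an \(\ell\)-power root of unity \(\xi\) with \(\xi^f=\zeta\); this is possible in \(\mu_{\ell^\infty}\), which is divisible. Define \(\chi\) on \(k^\times=U_k\times\pi^{\mathbb Z}\) by \(\chi|_{U_k}=1\) and \(\chi(\pi)=\xi\). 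Then \(\chi\) is unramified, \(\ell\)-primary, and satisfies \(\chi\circ N=\phi\).

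The points requiring care are therefore the finite-order and \(\ell\)-primary adjustments in Step 2, and the explicit choice of the root of unity in Step 3.
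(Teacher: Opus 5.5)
Your proposal is correct and follows the paper's proof essentially step for step. Both arguments reduce to the $\mathfrak P_0$-component, use Hilbert 90 for the cyclic unramified extension to descend the character to $N_{E/F}E^\times$, extend it via Lemma~\ref{lem:character-extension}, and then make the finite-order and $\ell$-primary adjustments. The one deviation is your Step 3: you build an unramified lift by hand from an $f$-th root $\xi$ of $\zeta$, whereas the paper notes that any lift $\chi$ with $\chi\circ N_{E/F}=\psi$ is already unramified, because $N_{E/F}(\mathcal O_E^\times)=\mathcal O_F^\times$. You use that same fact when you write $N E^\times=U_k\times\langle\pi^f\rangle$, so the lift from your Step 2 already works, and Step 3 is not the subtle point you expected.
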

\begin{proof}
	Fix a place \(\mathfrak P_0\mid \mathfrak p\), and let
	\[
		F:=K_{\mathfrak p},\qquad
		E:=L_{\mathfrak P_0},\qquad
		D:=D_{\mathfrak P_0},
	\]
	where \(D\) is the decomposition group of \(\mathfrak P_0\) in \(L/K\).  	Restriction to the \(\mathfrak P_0\)-component gives an isomorphism
	\[
		X_{\ell}\left(
		\prod_{\mathfrak P\mid \mathfrak p}L_{\mathfrak P}^{\times}
		\right)^{\Gamma}
		\;\xrightarrow{\;\sim\;}\;
		X_{\ell}(E^{\times})^{D}.
		\tag{a}
	\]
	Indeed, whether a \(\Gamma\)-invariant character is trivial determined by its \(\mathfrak{P}_0\)-component, because \(\Gamma\) acts transitively on the set of primes \(\mathfrak{P}\) above \(\mathfrak{p}\). Conversely, if \(\psi\in X_{\ell}(E^\times)^D\), then for
	\(\mathfrak P=\gamma(\mathfrak P_0)\) one defines
	\[
		\phi_{\mathfrak P}(x):=\psi(\gamma^{-1}x),
		\qquad x\in L_{\mathfrak P}^{\times},
	\]
	and this is well-defined precisely because \(\psi\) is \(D\)-invariant.

	Since \(\mathfrak p\) is unramified in \(L/K\), the local extension
	\(E/F\) is unramified, hence \(D=\operatorname{Gal}(E/F)\) is cyclic.
	Let
	\[
		I_D E^\times
		:=
		\left\langle \delta a/a : \delta\in D,\ a\in E^\times \right\rangle .
	\]
	By Hilbert 90 \cite[Corollary in p.151]{Serre_1979},
	\(
	\ker N_{E/F}=I_D E^\times.
	\)
	Thus we have an exact sequence.
	\[
		1
		\longrightarrow
		I_D E^\times
		\longrightarrow
		E^\times
		\xrightarrow{\,N_{E/F}\,}
		F^\times
		\longrightarrow
		F^\times/N_{E/F}E^\times
		\longrightarrow
		1.
		\tag{b}
	\]
	A character of \(E^\times\) is \(D\)-invariant exactly when it is
	trivial on \(I_D E^\times\). Hence every
	\(\psi\in X_\ell(E^\times)^D\) descends to a character
	\[
		\overline\psi:N_{E/F}E^\times\longrightarrow\C^\times,
		\qquad
		\overline\psi(N_{E/F}a)=\psi(a).
	\]
	Since \(E/F\) is unramified, \(N_{E/F}E^\times\) is an open subgroup of finite index in \(F^\times\). By Lemma~\ref{lem:character-extension}, \(\overline\psi\) extends to a continuous character of \(F^\times\). As the quotient \(F^\times/N_{E/F}E^\times\) is finite, the extension has finite order. Its prime-to-\(\ell\) component is trivial on \(N_{E/F}E^\times\), so its \(\ell\)-primary component has the same restriction as \(\overline\psi\). Thus we obtain \(\chi\in X_\ell(F^\times)\) satisfying \(\psi=\chi\circ N_{E/F}\). Consequently, we have the exact character sequence
	\[
		0
		\longrightarrow
		X_{\ell}\bigl(F^\times/N_{E/F}E^\times\bigr)
		\longrightarrow
		X_{\ell}(F^\times)
		\xrightarrow{\,\Nhat_{E/F}\,}
		X_{\ell}(E^\times)^D
		\longrightarrow
		0,
		\tag{c}
	\]
	Where \(\Nhat_{E/F}(\chi)=\chi\circ N_{E/F}\).

	Under the identification (a), the map in the statement is exactly the
	map \(\Nhat_{E/F}\) in (c). Hence it is surjective.

	Now suppose that
	\(
	\phi\in
	X_{\ell}\left(
	\prod_{\mathfrak P\mid \mathfrak p}L_{\mathfrak P}^{\times}
	\right)^{\Gamma}
	\)
	is unramified at every \(\mathfrak P\mid\mathfrak p\). Let
	\(\psi=\phi_{\mathfrak P_0}\in X_{\ell}(E^\times)^D\). Then \(\psi\)
	is trivial on \(\mathcal O_E^\times\). Choose
	\(\chi\in X_{\ell}(F^\times)\) such that
	\(
	\psi=\chi\circ N_{E/F}.
	\)
	Since \(E/F\) is unramified, the norm on units is surjective.
	Thus for every \(u\in \mathcal O_F^\times\), we may write
	\(u=N_{E/F}(v)\) with \(v\in \mathcal O_E^\times\), and then
	\[
		\chi(u)=\chi(N_{E/F}v)=\psi(v),
	\]
	which is trivial. Hence \(\chi\) is trivial on \(\mathcal O_F^\times\),
	i.e. \(\chi\) is unramified.
\end{proof}

\section{The global character factorisation}\label{sec:factorisation}

We now prove the main character-theoretic statement, which will allow us to choose a special central extension realizing the Schur multiplicator in the final step.

Let $T$ be a finite set of finite primes of $K$. Define
\[
	X_\ell(C_L)_{T,S}
	=
	\left\{
	\phi \in X_\ell(C_L) :
	\begin{array}{l}
		\phi \text{ is unramified for every } \mathfrak{P} \mid \mathfrak{p} \text{ with } \mathfrak{p} \notin T \cup S, \\[2pt]
			ext{and is split completely for every } \mathfrak{P}' \mid \mathfrak{p}' \text{ with } \mathfrak{p}' \in S
	\end{array}
	\right\}.
\]
and
\[
	X_\ell(C_K)_{T}
	:=\{\psi\in X_\ell(C_K):
	\psi\text{ is unramified for every }\mathfrak p\in T  \}.
\]

\begin{theorem}[Character factorisation]\label{thm:character-factorisation}
	Let \(K\) be a global function field with full constant field \(\F_q\). Let \(S\) be a non-empty finite set of places of \(K\), let \(T\) be a finite set of finite places disjoint from \(S\), and let \(L/K\) be a finite Galois extension with group \(\Gamma\), unramified outside \(T\), in which every place of \(S\) splits completely. Then the following hold.
	\begin{enumerate}
		\item If \(S=\{\infty\}\) and \(\ell\nmid q-1\), then
		\begin{equation}\label{eq:main-factorisation}
			X_\ell(C_L)^\Gamma
			=X_\ell(C_L)_{T,S}^\Gamma\cdot \Nhat_{L/K}\bigl(X_\ell(C_K)_T\bigr).
		\end{equation}
		\item If \(T\ne\varnothing\) and either \(S=\{\infty\}\) or \(\ell=p=\operatorname{char}K\), then
		\begin{equation}\label{eq:main-factorisation-II}
			X_\ell(C_L)^\Gamma
			=X_\ell(C_L)_{T,S}^\Gamma\cdot \Nhat_{L/K}\bigl(X_\ell(C_K)\bigr).
		\end{equation}
	\end{enumerate}
\end{theorem}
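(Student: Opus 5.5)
Given \(\phi\in X_\ell(C_L)^\Gamma\), I will construct \(\psi\) on \(C_K\) so that \(\phi\cdot(\psi\circ N_{L/K})^{-1}\) lies in \(X_\ell(C_L)_{T,S}^\Gamma\). Since \(\Nhat_{L/K}(\psi)\) is automatically \(\Gamma\)-invariant, only the local conditions need to be arranged. The plan is to first build a local character \(\chi=(\chi_\frakp)\) on the open subgroup \(J':=\prod_{\frakp\notin T\cup S}U_\frakp\times\prod_{\frakp\in S}K_\frakp^\times\), or on a slight enlargement of it. This character should match the local components of \(\phi\) through the norm. I will then try to make \(\chi\) trivial on \(J'\cap K^\times\) and extend it to all of \(C_K\) using Lemma~\ref{lem:character-extension}.

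\textbf{Local step.} Every \(\frakp\notin T\) is unramified in \(L/K\). So for each such \(\frakp\), the \(\Gamma\)-invariant semilocal component \((\phi_\frakP)_{\frakP\mid\frakp}\) descends by Proposition~\ref{prop:local-norm-descent}: there is \(\chi_\frakp\in X_\ell(K_\frakp^\times)\) with \(\phi_\frakP=\chi_\frakp\circ N_{L_\frakP/K_\frakp}\). For \(\frakp\in S\) the place splits completely, so \(\chi_\frakp\) is simply \(\phi_{\frakP_0}\). For \(\frakp\notin T\cup S\) I only need \(\chi_\frakp\) on \(U_\frakp\). Because the norm is surjective on units in the unramified case, \(\chi_\frakp|_{U_\frakp}\) is uniquely determined. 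Almost all \(\phi_\frakP\) are unramified, so the resulting product \(\chi\) on \(J'\) is continuous and has finite order dividing a power of \(\ell\).

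\textbf{Global compatibility (main obstacle).} We have \(J'\cap K^\times=E_{K,S}^{T}\), the \(S\)-units that are units at \(T\) as well (here \(T\) is imposed only through the units). I must check that \(\chi\) is trivial on these global elements. Write \(\epsilon=N_{L/K}(\eta)\) locally. Globally, \(\phi\) is trivial on \(L^\times\), but \(\epsilon\) need not be a global norm, so the obstruction is exactly the class of \(\chi|_{E}\) in \(X_\ell(E)\). This is where the hypotheses enter.

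When \(S=\{\infty\}\), \(E_{K,S}=\F_q^\times\). If \(\ell\nmid q-1\), then \(X_\ell(\F_q^\times)=1\), so there is no obstruction, and I may even impose on \(\chi\) the extra condition of being trivial on \(U_\frakp\) for \(\frakp\in T\). This yields \(\psi\in X_\ell(C_K)_T\) and gives case~(1).

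If \(T\ne\varnothing\), I will instead leave \(\chi\) free at a place \(\frakp_0\in T\). I then choose \(\chi_{\frakp_0}\) on \(U_{\frakp_0}\) to cancel \(\chi\) on the finite group of global units. For \(S=\{\infty\}\) this group is \(\F_q^\times\), which embeds into \(U_{\frakp_0}\), so the cancelling character exists by Lemma~\ref{lem:character-extension}. For \(\ell=p\), the group \(E_{K,S}\) modulo torsion is finitely generated free, and its \(p\)-completion must be handled inside \(U^{(1)}_{\frakp_0}\). Here I would use that \(X_p\) of the torsion part \(\F_q^\times\) is trivial. The remaining free part maps injectively into \(\prod U^{(1)}\), and I would invoke the \(p\)-adic independence of units available in characteristic \(p\) (Leopoldt-type injectivity, which holds in function fields) to extend a prescribed character from the closure. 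This closure argument is the step I expect to be delicate.

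\textbf{Conclusion.} With \(\chi\) trivial on \(J'\cap K^\times\), it defines a character of the open subgroup \(J'K^\times/K^\times\subset C_K\). I extend it by Lemma~\ref{lem:character-extension} and take its \(\ell\)-part, as in the proof of Proposition~\ref{prop:local-norm-descent}, to obtain \(\psi\). Then \(\phi\cdot\Nhat_{L/K}(\psi)^{-1}\) is unramified outside \(T\cup S\) and trivial at the places above \(S\), which gives the factorisations \eqref{eq:main-factorisation} and \eqref{eq:main-factorisation-II}.
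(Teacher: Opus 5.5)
Your proposal is correct in outline. Part~(1) is the same argument as the paper's. For part~(2) you take a leaner route.

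One small fix first. Your $J'$ should include $\prod_{\frakp\in T}U_\frakp$, i.e.\ $J'=J_K^S$. As literally written it has no $T$-components, and then $J'\cap K^\times\neq E_{K,S}$ and $J'K^\times/K^\times$ is not open. Your later text makes clear this is what you intend.

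\textbf{Comparison with the paper.} In part~(2) the paper enlarges to $U=S\cup T$ and works on $J_K^U$ with the full groups $K_t^\times$ for every $t\in T$. It must therefore kill a character on the larger group $E_{K,U}$. This is why it needs Lemma~\ref{lem:leopoldt-embedding}, including (for $S=\{\infty\}$, $\ell\ne p$) the divisor/degree argument embedding $\widetilde{E_{K,U}}^\ell$ into $\prod_t\widetilde{K_t^\times}^\ell$.

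You keep only unit components at $T$, so the obstruction lives on $E_{K,S}$ and is cancelled at a single place $\frakp_0\in T$. For $S=\{\infty\}$ this is completely elementary: $E_{K,S}=\F_q^\times$ is finite and injects into $U_{\frakp_0}$. You also get slightly more, namely $\psi$ unramified at $T\setminus\{\frakp_0\}$. The paper's route buys a single uniform construction on $C_{U,K}$; yours avoids the completion argument entirely when $S=\{\infty\}$.

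\textbf{The step you must supply.} For $\ell=p$ you leave the ``Leopoldt-type injectivity'' unproved. It is true and needed, and it is exactly the purity statement in Lemma~\ref{lem:leopoldt-embedding}\textup{(2)}, applied to $E_{K,S}\subseteq E_{K,U}$.

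\begin{enumerate}
\item Show $K\cap(K_{\frakp_0}^\times)^p=K^p$. A uniformizer $\pi\in K$ gives the common $p$-basis $1,\pi,\dots,\pi^{p-1}$ of $K/K^p$ and of $K_{\frakp_0}/K_{\frakp_0}^p$; alternatively use differentials as the paper does.
\item By induction, using injectivity of Frobenius, deduce $E_{K,S}\cap U_{\frakp_0}^{p^n}=E_{K,S}^{p^n}$.
\item Hence the finite group $E_{K,S}/E_{K,S}^{p^n}$ is a closed subgroup of the compact group $U_{\frakp_0}/U_{\frakp_0}^{p^n}$. Your $p$-primary obstruction character factors through this finite group, since it kills $\F_q^\times$.
\item Lemma~\ref{lem:character-extension} then gives the cancelling $\chi_{\frakp_0}$, of $p$-power order.
\end{enumerate}

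This is the precise form of your ``closure'' step. Without the equality $E_{K,S}\cap U_{\frakp_0}^{p^n}=E_{K,S}^{p^n}$, the character on $E_{K,S}$ need not be continuous for the topology induced from $U_{\frakp_0}$. A single place $\frakp_0$ suffices here; no product over $T$ is needed.
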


\begin{proof}[Proof of Theorem~\ref{thm:character-factorisation}, Case~\textup{(1)}]
Since both \(X_\ell(C_L)_{T,S}^\Gamma\) and \(\Nhat_{L/K}(X_\ell(C_K)_T)\) are subgroups of \(X_\ell(C_L)^\Gamma\), only the reverse inclusion requires proof. Let \(\phi\in X_\ell(C_L)^\Gamma\), and put
	\[
		R:=\{\frakp:\frakp\notin T\cup S\text{ and }\phi_\frakP\text{ is ramified for some }\frakP\mid\frakp\}.
	\]
	The set \(R\) is finite.
	\begin{figure}[ht]
			\centering
			\begin{tikzpicture}[
					scale=0.4,
					every node/.style={font=\Large},
					region/.style={draw=black, line width=0.8pt},
					outer/.style={draw=black, line width=0.9pt}
				]
	
				\draw[outer] (0,0) ellipse (4.6cm and 3.0cm);
	
				\draw[region] (-1.7,1.0) circle (1.35cm);
				\node at (-1.7,1.0) {$T$};
	
				\draw[region] (1.8,0.65) circle (0.9cm);
				\node at (1.8,0.65) {$S$};
	
				\draw[region] (-0.35,-1.35) circle (1.1cm);
				\node at (-0.35,-1.35) {$R$};
	
			\end{tikzpicture}
			\caption{Relation among the prescribed ramification set \(T\), the splitting set \(S\), and the correction set \(R\).}
			\label{fig:sets-tsr}
		\end{figure}
	For each \(\frakp\in R\), Proposition~\ref{prop:local-norm-descent} gives \(\chi_\frakp\in X_\ell(K_\frakp^\times)\) such that \(\phi_\frakP=\chi_\frakp\circ N_{L_\frakP/K_\frakp}\). Since \(S=\{\infty\}\) splits completely, we identify the local component above \(\infty\) with a character \(\phi_\infty\in X_\ell(K_\infty^\times)\). Define
	\[
			heta:U_f(K)\times K_\infty^\times\longrightarrow\C^\times,
		\qquad
			heta((u_\frakp)_\frakp)
		=\left(\prod_{\frakp\in R}\chi_\frakp(u_\frakp)\right)\phi_\infty(u_\infty).
	\]
	For \(x\in E_{K,S}\), every factor of \(\theta(x)\) has \(\ell\)-power order. Since \(S=\{\infty\}\), the degree argument gives \(E_{K,S}=\F_q^\times\), so \(\theta(x)\) also has order dividing \(q-1\). As \(\ell\nmid q-1\), we have \(\theta(x)=1\). Thus \(\theta\) descends to a finite \(\ell\)-primary character of \(C_{S,K}\), and Lemma~\ref{lem:character-extension} gives \(\psi\in X_\ell(C_K)\) with the same restriction to \(U_f(K)\times K_\infty^\times\). Since \(\theta\) is trivial on \(U_\frakp\) for \(\frakp\in T\), we have \(\psi\in X_\ell(C_K)_T\). The local definition of \(\theta\) then shows that \(\phi\Nhat_{L/K}(\psi^{-1})\) is unramified outside \(T\) and trivial above \(S\). Hence it lies in \(X_\ell(C_L)_{T,S}^\Gamma\), proving \eqref{eq:main-factorisation}.
\end{proof}

For the second part we use a single pro-\(\ell\) embedding statement. Let \(G\) be a topological group. We denote by
\[
	\widetilde{G}^\ell = \varprojlim_{N} G/N
\]
with \(N\) running through the open normal subgroups of \(G\) such that \(G/N\) is an \(\ell\)-group, the pro-\(\ell\) completion of \(G\). Then there exists a natural continuous homomorphism \(G\to \widetilde{G}^\ell\) with dense image. More details about the pro-\(\ell\) completion can be found in \cite[Chapter 1]{Frohlich_1983} or \cite{Neukirch_2008}.

\begin{lemma}[Pro-\(\ell\) embedding of \(U\)-units]\label{lem:leopoldt-embedding}
	Let \(K\) be a global function field with full constant field \(\F_q\), let \(S\) be a non-empty finite set of places of \(K\), and let \(T\) be a non-empty finite set of finite places disjoint from \(S\). Put \(U=S\cup T\). Assume either
	\begin{enumerate}
		\item \(S=\{\infty\}\) and \(\ell\nmid q\), or
		\item \(\ell=p=\operatorname{char}K\).
	\end{enumerate}
	Then the diagonal embedding \(E_{K,U}\to\prod_{t\in T}K_t^\times\) induces a continuous injection
	\[
		\widetilde{E_{K,U}}^\ell
		\longrightarrow
		\prod_{t\in T}\widetilde{K_t^\times}^\ell.
	\]
	In case~\textup{(2)}, for every \(t\in T\) the map \(\widetilde{E_{K,U}}^p\to\widetilde{K_t^\times}^p\) is already injective.
\end{lemma}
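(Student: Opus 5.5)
The plan is to reduce everything to explicit finitely generated abelian groups. By Dirichlet's unit theorem for function fields, \(E_{K,U}\) is finitely generated, with torsion subgroup \(\F_q^\times\), so \(E_{K,U}\simeq \F_q^\times\times\Lambda\) with \(\Lambda\) free of rank \(|U|-1\). For a finitely generated abelian group \(A\), the pro-\(\ell\) completion \(\widetilde A^\ell\) is \(A\otimes\Z_\ell\); on the torsion part this is just the \(\ell\)-primary part. Since \(\Z_\ell\) is flat, the split sequence \(1\to\F_q^\times\to E_{K,U}\to\Lambda\to 1\) stays exact after completion. Locally, \(K_t^\times\simeq \Z\times\F_{q_t}^\times\times U_t^{(1)}\), so
\[
\widetilde{K_t^\times}^\ell\simeq\Z_\ell\times(\F_{q_t}^\times)_\ell \quad\text{for }\ell\ne p,
\qquad
\widetilde{K_t^\times}^p\simeq\Z_p\times U_t^{(1)}.
\]
In the second case \(U_t^{(1)}\) is already pro-\(p\) and is torsion-free, because a field of characteristic \(p\) has no nontrivial \(p\)-power roots of unity. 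Continuity of the induced map is automatic, since the source is a finitely generated \(\Z_\ell\)-module.

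For case~(1), write \(U=\{\infty\}\cup T\) and consider the valuation map \(E_{K,U}\to\Z^T\), \(x\mapsto(v_t(x))_{t\in T}\). Its kernel is \(E_{K,\{\infty\}}=\F_q^\times\) by the degree argument already used in the proof of Theorem~\ref{thm:character-factorisation}. Hence \(\Lambda\hookrightarrow\Z^T\), and by flatness \(\Lambda\otimes\Z_\ell\hookrightarrow\Z_\ell^T\). Now suppose \(x\in\widetilde{E_{K,U}}^\ell\) maps to \(1\) in \(\prod_t\widetilde{K_t^\times}^\ell\). Then its \(\Z_\ell^T\)-component vanishes, so \(x\) lies in the image of \((\F_q^\times)_\ell\). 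The constants \(\F_q^\times\) reduce injectively into every residue field \(\F_{q_t}^\times\), so \((\F_q^\times)_\ell\) injects into \((\F_{q_t}^\times)_\ell\) for any single \(t\). Therefore \(x=1\).

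For case~(2), \(\ell=p\), the torsion contributes nothing because \(p\nmid q-1\), so \(\widetilde{E_{K,U}}^p=\Lambda\otimes\Z_p\). Fix one \(t\in T\); it suffices to show \(\Lambda\otimes\Z_p\to\Z_p\times U_t^{(1)}\) is injective. I would argue in two steps.

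First, the reduction mod \(p\), \(\Lambda/p\Lambda\to K_t^\times/K_t^{\times p}\), is injective. Indeed \(K_t/K\) is separable, so \(K_t\) is linearly disjoint from \(K^{1/p}\), giving \(K_t^{p}\cap K=K^p\). Thus a \(U\)-unit that is a \(p\)-th power in \(K_t\) is \(y^p\) with \(y\in K\), and \(y\) is automatically a \(U\)-unit. Also \(\F_q^\times\subset K^{\times p}\), so the torsion part causes no trouble.

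Second, I upgrade this mod-\(p\) injectivity. Let \(N\) be the kernel of \(\Lambda\otimes\Z_p\to\Z_p\times U_t^{(1)}\). The target is torsion-free, so \(N\) is saturated in the free module \(\Lambda\otimes\Z_p\). Hence if \(N\ne0\), it contains an element not in \(p(\Lambda\otimes\Z_p)\). That element gives a nonzero class in \(\Lambda/p\Lambda\) mapping to \(1\) in \(\widetilde{K_t^\times}^p/p=K_t^\times/K_t^{\times p}\), which contradicts the first step. This yields the stronger single-\(t\) injectivity, and a fortiori the product statement.

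The main obstacle is this last "Leopoldt-type" step for \(\ell=p\). Its target \(U_t^{(1)}\) is an infinitely generated \(\Z_p\)-module, so a rank count does not suffice. The saturation argument combined with linear disjointness from \(K^{1/p}\) is what I expect to carry it, and I would check carefully that the identification \(\widetilde{K_t^\times}^p/p\simeq K_t^\times/K_t^{\times p}\) is legitimate.
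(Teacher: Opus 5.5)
Your proposal is correct and follows essentially the same route as the paper. In case~(1) you kill the free part via the valuations at $T$, which is equivalent to the paper's divisor-plus-degree argument, and then use injectivity of the constants into a residue field. In case~(2) you prove $E_{K,U}\cap (K_t^\times)^p=E_{K,U}^p$ and lift this mod-$p$ injectivity using that $\widetilde{K_t^\times}^p\simeq\Z_p\times U_t^{(1)}$ is torsion-free, exactly as the paper does.

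The only real difference is how you justify the $p$-purity: you use $K_t\cap K^{1/p}=K$, whereas the paper uses the differential criterion $da=0\Leftrightarrow a\in K^p$. Your version is valid and slightly more elementary. It follows because $K^{1/p}=K(\pi_t^{1/p})$ has prime degree $p$ over $K$, and $\pi_t^{1/p}\notin K_t$ since $v_t(\pi_t)=1$.
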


\begin{proof}
	By \cite[Proposition 14.2]{Rosen_2002}, \(E_{K,U}\) is a finitely generated abelian group.

	Assume first that \(S=\{\infty\}\) and \(\ell\nmid q\). The divisor map gives a left-exact sequence
	\[
		1\longrightarrow \F_q^\times
		\longrightarrow E_{K,U}
		\xrightarrow{\operatorname{div}}
		\Div_U^0(K),
	\]
	where \(\Div_U^0(K)\) denotes the group of degree-zero divisors supported on \(U\). For a finitely generated abelian group \(A\), its pro-\(\ell\) completion is canonically identified with \(A\otimes_\Z\Z_\ell\). Since \(\Z_\ell\) is flat over \(\Z\), we obtain the left exact sequence
	\begin{equation}\label{eq:completed-unit-divisor}
		1\longrightarrow
		\F_q^\times\otimes_\Z\Z_\ell
		\longrightarrow
		\widetilde{E_{K,U}}^{\ell}
		\xrightarrow{\widetilde{\operatorname{div}}}
		\widetilde{\Div_U^0(K)}^{\ell}.
	\end{equation}
	For every \(t\in T\), the valuation \(v_t:K_t^\times\to\Z\) induces a continuous homomorphism \(\widetilde v_t:\widetilde{K_t^\times}^{\ell}\to\Z_\ell\).
	The divisor and valuation maps are compatible with pro-\(\ell\) completion, as shown in the following commutative diagram:
	\begin{figure}[htbp]
			\centering
			\begin{tikzcd}[column sep=2.25em]
				{E _{K,U}} && {\mathrm{Div} _{U} ^{0}(K)} & {\widetilde{E _{K,U}} ^{\ell}} && {\widetilde{\mathrm{Div} _{U} ^{0}(K)} ^{\ell}} \\
				\\
				{K_t^{\times}} && {\mathbb{Z}} & {\widetilde{K_t^{\times}} ^{\ell}} && {\mathbb{Z}_{\ell}}
				\arrow["{\mathrm{div}}", from=1-1, to=1-3]
				\arrow[hook, from=1-1, to=3-1]
				\arrow[""{name=0, anchor=center, inner sep=0}, "{m_t}"', from=1-3, to=3-3]
				\arrow["{\widetilde{\mathrm{div}}}", from=1-4, to=1-6]
				\arrow[""{name=1, anchor=center, inner sep=0}, from=1-4, to=3-4]
				\arrow[from=1-6, to=3-6]
				\arrow["{v_t}", from=3-1, to=3-3]
				\arrow[from=3-4, to=3-6]
				\arrow["{\text{induce}}"{pos=0.7}, curve={height=6pt}, between={0.1}{0.9}, dashed, from=0, to=1]
			\end{tikzcd}
			\caption{Compatibility of divisor and valuation maps after pro-\(\ell\) completion.}
			\label{fig:pro-ell-divisor-compatibility}
		\end{figure}
	Here \(m_t\) denotes the map taking a divisor supported on \(U\) to its coefficient at \(t\), and \(v_t\) is the valuation at \(t\).

	Let \(x\in\widetilde{E_{K,U}}^{\ell}\) have trivial image in \(\prod_{t\in T}\widetilde{K_t^\times}^{\ell}\). Write
	\[
		\widetilde{\operatorname{div}}(x)
		=a_\infty\,\infty+\sum_{t\in T}a_t t,
		\qquad a_\infty,a_t\in\Z_\ell.
	\]
	By the commutativity of Figure~\ref{fig:pro-ell-divisor-compatibility}, the image of \(\widetilde{\operatorname{div}}(x)\) in \(\Z_\ell\) is trivial for every \(t\in T\). This image is precisely the coefficient \(a_t\). Hence \(a_t=0\) for every \(t\in T\). Since the divisor has degree zero, \(a_\infty\deg(\infty)=0\) in \(\Z_\ell\), and hence \(a_\infty=0\). Thus \(\widetilde{\operatorname{div}}(x)=0\), so \eqref{eq:completed-unit-divisor} gives \(x\in\F_q^\times\otimes_\Z\Z_\ell\). For any \(t\in T\), the \(\ell\)-primary subgroup of \(\F_q^\times\) embeds into the \(\ell\)-primary subgroup of the residue field at \(t\). Since \(\ell\nmid q\), the kernel of the reduction map \(\mathcal O_t^\times\to\F_{q^{\deg(t)}}^\times\) is pro-\(p\) and contains no non-trivial element of \(\ell\)-power order. Hence \(\F_q^\times\otimes_\Z\Z_\ell\to\widetilde{K_t^\times}^{\ell}\) is injective, and therefore \(x=1\).

	Now assume \(\ell=p=\operatorname{char}K\), and fix \(t\in T\). Since \(|\F_q^\times|=q-1\) is prime to \(p\), \(\widetilde{E_{K,U}}^p\) is a finite free \(\Z_p\)-module. Moreover,
	\[
		K_t^\times\simeq \langle\pi_t\rangle\times \F_{q^{\deg(t)}}^\times\times U_t^{(1)},
	\]
	where \(U_t^{(1)}=1+\mathfrak m_t\). Thus \(\widetilde{K_t^\times}^{p}\simeq \Z_p\times U_t^{(1)}\), which has no non-trivial \(p\)-torsion, and the natural map
	\[
		K_t^\times/(K_t^\times)^p
		\longrightarrow
		\widetilde{K_t^\times}^{p}/(\widetilde{K_t^\times}^{p})^p
	\]
	is an isomorphism.

	We claim that
	\begin{equation}\label{eq:p-pure-units}
		E_{K,U}\cap (K_t^\times)^p=E_{K,U}^p.
	\end{equation}
	Indeed, let \(a\in E_{K,U}\) and suppose that \(a=b^p\) in \(K_t\). Choose \(\pi_t\in K\) with \(v_t(\pi_t)=1\). Then \(d\pi_t\ne0\), and the differential criterion over the perfect constant field gives
	\[
		da=0\quad\Longleftrightarrow\quad a\in K^p;
	\]
	see \cite[Chapter~4]{Stichtenoth_2009}. Writing \(da=f\,d\pi_t\) with \(f\in K\), its image in the completed differential module is zero because \(a\) is a \(p\)-th power in \(K_t\). Since \(d\pi_t\ne0\), this forces \(f=0\), so \(da=0\), and therefore \(a=c^p\) for some \(c\in K\). For every place \(v\notin U\),
	\[
		0=v(a)=p\,v(c),
	\]
	so \(c\in E_{K,U}\). This proves \eqref{eq:p-pure-units}.

	Consequently
	\[
		E_{K,U}/E_{K,U}^p
		\longrightarrow
		\widetilde{K_t^\times}^{p}/(\widetilde{K_t^\times}^{p})^p
	\]
	is injective. If \(x\) lies in the kernel of \(\widetilde{E_{K,U}}^p\to\widetilde{K_t^\times}^p\) and \(x\ne0\), write \(x=p^m y\) with \(y\notin p\widetilde{E_{K,U}}^p\). Since \(\widetilde{K_t^\times}^p\) has no non-trivial \(p\)-torsion, the image of \(y\) is trivial. Reducing modulo \(p\) contradicts the preceding injectivity. Hence \(x=0\). This proves the assertion for every \(t\in T\), and therefore also for the diagonal map. Continuity in both cases follows from functoriality of pro-\(\ell\) completion; cf. \cite[Lemma 3.2.3]{ProfiniteGroups}.
\end{proof}

\begin{proof}[Proof of Theorem~\ref{thm:character-factorisation}, Case~\textup{(2)}]
Put \(U=S\cup T\). Let \(\phi\in X_\ell(C_L)^\Gamma\) and define
	\[
		R:=\{\frakp\notin T\cup S:\phi_\frakP\text{ is ramified for some }\frakP\mid\frakp\}.
	\]
	For every \(\frakp\in R\), Proposition~\ref{prop:local-norm-descent} gives \(\chi_\frakp\in X_\ell(K_\frakp^\times)\) whose local norm pullback is \(\phi_\frakP\). For each \(s\in S\), complete splitting identifies \(L_v\) with \(K_s\) for every \(v\mid s\); by \(\Gamma\)-invariance the corresponding local character is independent of \(v\), and we denote it by \(\phi_s\). Define
	\[
			heta:E_{K,U}\longrightarrow\C^\times,
		\qquad
			heta(a)=\prod_{\frakp\in R}\chi_\frakp(a)\prod_{s\in S}\phi_s(a).
	\]
	Its image has finite \(\ell\)-power order, so \(\theta\) extends continuously to \(\theta':\widetilde{E_{K,U}}^\ell\to\C^\times\). Under either hypothesis in part~\textup{(2)}, Lemma~\ref{lem:leopoldt-embedding} identifies \(\widetilde{E_{K,U}}^\ell\) with a closed subgroup of
	\[
		P:=\prod_{t\in T}\widetilde{K_t^\times}^{\ell}.
	\]
	By Lemma~\ref{lem:character-extension}, \(\theta'\) extends to a continuous character \(\rho:P\to\C^\times\). Since \(P\) is pro-\(\ell\), \(\rho\) has finite \(\ell\)-power order. Let \(\rho'\) be its pullback to \(\prod_{t\in T}K_t^\times\).
	\begin{figure}[htbp]
			\centering
			\begin{tikzcd}
				{\prod_{t\in T}K_t^{\times}} & {\prod_{t\in T}\widetilde{K_t^{\times}} ^{\ell}} & {\mathbb{C}^{\times}} \\
				& {\widetilde{E _{K,U}} ^{\ell}}
				\arrow["\tau", from=1-1, to=1-2]
				\arrow["\rho", from=1-2, to=1-3]
				\arrow[from=2-2, to=1-2]
				\arrow["{\theta'}"'{pos=0.3}, from=2-2, to=1-3]
			\end{tikzcd}
			\caption{Extension of the pro-\(\ell\) character \(\theta'\) to the local components above \(T\).}
			\label{fig:theta-extension}
		\end{figure}
	We define
	\[
		\psi'((x_\frakp)_\frakp)
		=(\rho')^{-1}((x_t)_{t\in T})
		\prod_{\frakp\in R}\chi_\frakp(x_\frakp)
		\prod_{s\in S}\phi_s(x_s)
	\]
	on \(J_K^U\), taking it to be trivial on all remaining unit components. By construction \(\psi'(a)=1\) for every \(a\in E_{K,U}\). Hence \(\psi'\) descends to a finite \(\ell\)-primary character of \(C_{U,K}\). Since \(C_{U,K}\) has finite index in \(C_K\), Lemma~\ref{lem:character-extension} gives \(\psi\in X_\ell(C_K)\) with \(\psi|_{J_K^U}=\psi'\).

	Set \(\Phi=\phi\Nhat_{L/K}(\psi^{-1})\). For \(\frakp\notin T\cup S\), the local norm construction shows that \(\Phi\) is unramified above \(\frakp\). If \(s\in S\) and \(v\mid s\), then \(L_v=K_s\), the local norm is the identity, and \(\psi_s=\phi_s\); hence \(\Phi_v=1\). Therefore \(\Phi\in X_\ell(C_L)_{T,S}^\Gamma\), proving \eqref{eq:main-factorisation-II}.
\end{proof}

\section{The multiplicator-free theorem}\label{sec:main}

We now apply the character factorisation theorem to the Schur multiplicator.

\begin{theorem}[Restricted ramification and multiplicator freeness]\label{thm:main}
Let \(K\) be a global function field with full constant field
\(\mathbb F_q\), and let \(T\) be a finite set of finite places of \(K\).
Then the following hold.
\begin{enumerate}
\item If \(S=\{\infty\}\) and either \(\ell\nmid(q-1)\) or \(T\neq\varnothing\), then
\(\operatorname{Gal}(K(\ell,T,S)/K)\) is multiplicator free.
\item If \(p=\operatorname{char}K\), \(\ell=p\), and \(T\neq\varnothing\),
then the same conclusion holds for every non-empty finite set \(S\) disjoint from \(T\).
\end{enumerate}
In both cases, for every finite Galois subextension
\[
L/K\subseteq K(\ell,T,S)/K,
\]
the Schur multiplier of \(\operatorname{Gal}(L/K)\) admits a central
realization inside \(K(\ell,T,S)\).
\end{theorem}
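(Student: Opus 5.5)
Fix a finite Galois subextension \(L/K\subseteq K(\ell,T,S)/K\) with group \(\Gamma=\operatorname{Gal}(L/K)\). This is an \(\ell\)-group, and \(L/K\) is unramified outside \(T\) with every place of \(S\) split completely. By the characterization recalled in Section~\ref{sec:preliminaries} (Fröhlich's criterion together with the isomorphism \(g\)), multiplicator freeness of \(\Omega:=\operatorname{Gal}(K(\ell,T,S)/K)\) amounts to the following: for every such \(L\), the map \(g_F\) attached to \(F=K(\ell,T,S)\) is an isomorphism. It therefore suffices to produce, for each \(L\), a finite Galois extension \(M/K\) with \(L\subseteq M\subseteq K(\ell,T,S)\) that realizes \(M(\Gamma)\). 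If \(M\) realizes \(M(\Gamma)\), then the map \(M(\Gamma)\to\frac{\operatorname{Gal}(M/L)\cap[\ldots]}{[\ldots]}\) is injective, and it factors through \(g_F\). Hence \(g_F\) is injective, and it is surjective by construction. This also gives the final "central realization" assertion.

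To construct \(M\), I would use Proposition~\ref{central-criterion}. By Proposition~\ref{central-existence}, there is a finite central extension realizing \(M(\Gamma)\), and we may take it to be an \(\ell\)-extension. So \(r\) restricted to \(X_\ell(C_L)^\Gamma\) already surjects onto \(X(H^{-1}(\Gamma,C_L))\); since that group is the dual of a finite \(\ell\)-group, its \(\ell\)-part is everything. Concretely, \(H:=\Phi(F_0/L)\) is a finite subgroup of \(X_\ell(C_L)^\Gamma\) with \(r(H)=X(H^{-1}(\Gamma,C_L))\).

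Now apply Theorem~\ref{thm:character-factorisation}. In case (1) with \(\ell\nmid q-1\), use \eqref{eq:main-factorisation}. In case (1) with \(T\neq\varnothing\), and in case (2), use \eqref{eq:main-factorisation-II}. The hypotheses match: for \(S=\{\infty\}\) with \(\ell=p\), the pro-\(\ell\) lemma was stated for \(\ell\nmid q\), which is an issue, but when \(\ell=p\) we have \(\ell\nmid q-1\), so part (1) of the factorisation applies anyway. In each case
\[
X_\ell(C_L)^\Gamma=G_1G_2,\qquad G_1=X_\ell(C_L)_{T,S}^\Gamma,\quad G_2\subseteq \Nhat_{L/K}(X(C_K))=\Ker(r),
\]
using exactness of \eqref{eq:cohomological-exact}. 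Lemma~\ref{lem:finite-replacement}, applied with \(\rho=r\), then yields a finite subgroup \(H_1\subseteq X_\ell(C_L)_{T,S}^\Gamma\) with \(r(H_1)=X(H^{-1}(\Gamma,C_L))\).

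Let \(M\) be the finite abelian extension of \(L\) with \(\Phi(M/L)=H_1\). Since \(H_1\) is \(\Gamma\)-invariant, the corresponding norm group is \(\Gamma\)-stable, so \(M/K\) is Galois; by Proposition~\ref{central-criterion}(1) it is central, and by (2) it realizes \(M(\Gamma)\). The characters in \(H_1\) are \(\ell\)-primary, unramified outside the places above \(T\), and trivial on the completions above \(S\). Hence \(M/L\) is an \(\ell\)-extension, unramified outside \(T\), in which all places above \(S\) split completely. Thus \(M/K\) is an \(\ell\)-extension (being an extension of \(\ell\)-groups) with the same properties, so \(M\subseteq K(\ell,T,S)\).

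I expect the main obstacle to be the group-theoretic passage from the finite-level realizations to \(M(\Omega)=1\). This requires checking that the surjection \(M(\Gamma)\to (G'_L\cap[\Omega,\Omega])/[G'_L,\Omega]\), for \(G'_L=\operatorname{Gal}(K(\ell,T,S)/L)\), is injective, by comparing it with the quotient through \(\operatorname{Gal}(M/K)\) via the commutative diagram in Figure~\ref{fig:schur-multiplicator-maps}, where \(\overline K\) is replaced by \(K(\ell,T,S)\) and \(F\) by \(M\). One must then invoke Fröhlich's criterion for all open normal subgroups of \(\Omega\), which correspond exactly to the finite Galois subextensions \(L\).
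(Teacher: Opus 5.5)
Your proposal is correct and follows the paper's proof essentially step by step. The steps are the same: an $\ell$-primary central realization from Proposition~\ref{central-existence}; the factorisation of Theorem~\ref{thm:character-factorisation} with $\Nhat_{L/K}(Y)\subseteq\Ker(r)$; Lemma~\ref{lem:finite-replacement} to land in $X_\ell(C_L)_{T,S}^\Gamma$; the class field of $H_1$ inside $K(\ell,T,S)$; and the commutative triangle forcing injectivity at the top level. Your worry about $S=\{\infty\}$ with $\ell=p$ is unnecessary, since case~(2) of Lemma~\ref{lem:leopoldt-embedding} covers $\ell=p$ for any non-empty $S$. Even so, your fallback to part~(1) of the factorisation is exactly the paper's own split into $\ell\nmid q-1$ versus $\ell\mid q-1$.
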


\begin{proof}
	Fix an algebraic closure \(K^{\ac}\) of \(K\), and put
	\[
		\Omega:=\Gal(K^{\ac}/K),
		\qquad
		A:=\Gal(K^{\ac}/K(\ell,T,S)).
	\]
	Then \(\Omega/A\simeq \Gal(K(\ell,T,S)/K)\).
	We must prove that \(\Omega/A\) is multiplicator free.  Let \(B/A\) be an arbitrary open normal subgroup of \(\Omega/A\), with \(B\triangleleft\Omega\) the corresponding open normal subgroup containing \(A\).  Put
	\[
		L:=(K^{\ac})^B,
		\qquad
		\Gamma:=\Gal(L/K)\simeq \Omega/B.
	\]
	Since \(A\subset B\), we have \(L\subset K(\ell,T,S)\).
	Thus \(L/K\) is a finite \(\ell\)-extension, unramified at every finite place outside \(T\), and every place in \(S\) splits completely in \(L/K\).
	By the definition of multiplicator-free, it suffices to show that the natural map
	\begin{equation}\label{eq:gA}
		g_A:M(\Gamma)
		\longrightarrow
		\frac{B/A\cap(\Omega/A,\Omega/A)}{(B/A,\Omega/A)}
	\end{equation}
	is an isomorphism.

	If there exists a finite central extension \(F\) of \(L /K\) contained in \(K(\ell, T, S)\) such that it realises the Schur multiplicator \(M(\Gamma)\), we are done.
	By Proposition \ref{central-existence}, there exists a finite central \(\ell\)-extension \(M/L\), with \(L\subset M\subset\overline K\), which is Galois over \(K\) and realises \(M(\Gamma)\). By Proposition \ref{central-criterion}, we have
	\[
		\Phi(M/L)\subset X_\ell(C_L)^\Gamma \text{ and }
		r\bigl(\Phi(M/L)\bigr)=X\bigl(H^{-1}(\Gamma,C_L)\bigr).
	\]
	In case~(1), if \(\ell\nmid q-1\), apply Theorem~\ref{thm:character-factorisation}\textup{(1)} and put \(Y=X_\ell(C_K)_T\). If \(\ell\mid q-1\), then \(T\ne\varnothing\) and Theorem~\ref{thm:character-factorisation}\textup{(2)} applies; in this case put \(Y=X_\ell(C_K)\). In case~(2), Theorem~\ref{thm:character-factorisation}\textup{(2)} applies with \(\ell=p\), and we put \(Y=X_p(C_K)\). In either case,
	\[
		X_\ell(C_L)^\Gamma
		=X_\ell(C_L)_{T,S}^\Gamma\cdot \Nhat_{L/K}(Y),
	\]
	and by exact sequence \eqref{eq:cohomological-exact}, \(\Nhat_{L/K}(Y)\subset \Ker(r)\).
	Now, using Lemma \ref{lem:finite-replacement} with
	\[
		G=X_\ell(C_L)^\Gamma,
		\quad
		G_1=X_\ell(C_L)_{T,S}^\Gamma,
		\quad
		G_2=\Nhat_{L/K}(Y),
	\]
	we obtain a finite subgroup
	\(
	H_1\subset X_\ell(C_L)_{T,S}^\Gamma
	\)
	such that
	\begin{equation*}
		r(H_1)=X\bigl(H^{-1}(\Gamma,C_L)\bigr).
	\end{equation*}

	By class field theory, the finite subgroup \(H_1\subset X _{\ell}(C_L)\) corresponds to a finite abelian \(\ell\)-extension \(F/L\) such that \(\Phi(F/L)=H_1\). Set \(N:=\bigcap_{\chi\in H_1}\Ker(\chi)=N_{F/L}C_F\).
	Since \(H_1\subset X_\ell(C_L)^\Gamma\), for every \(\gamma\in\Gamma\), \(\chi\in H_1\), and \(a\in N\),
	\[
		\chi(\gamma a)=(\gamma^{-1}\chi)(a)=\chi(a)=1.
	\]
	Hence \(N\) is \(\Gamma\)-stable. By the Galois equivariance of the class field correspondence, \(F/K\) is Galois. Since \(\Phi(F/L)=H_1\subset X(C_L)^\Gamma\), Proposition~\ref{central-criterion} shows that \(F\) is a central extension of \(L/K\).

	Moreover, every character in \(H_1\subset X_\ell(C_L)_{T,S}\) is unramified at the places of \(L\) above finite places outside \(T\), and is trivial on \(L_v^\times\) for every \(v\mid S\). By local class field theory, \(F/L\) is therefore unramified outside \(T\), and every place above \(S\) splits completely in \(F/L\). Since \(L/K\) has the same local properties, so does \(F/K\). Finally, \(\Gal(F/L)\) and \(\Gal(L/K)\) are both \(\ell\)-groups, hence \(\Gal(F/K)\) is an \(\ell\)-group. Therefore \(F\subset K(\ell,T,S)\).
	Moreover,
	\[
		r\bigl(\Phi(M/L)\bigr)
		=r\bigl(\Phi(F/L)\bigr)
		=X\bigl(H^{-1}(\Gamma,C_L)\bigr).
	\]
	By Proposition \ref{central-criterion}, the central extension \(F/L\) also realises the Schur multiplicator \(M(\Gamma)\).  Let \(H _{F}: = \operatorname{Gal}(\overline{K} /F)\). Hence the natural map
	\begin{equation}\label{eq:gF}
		g_F:M(\Gamma)
		\longrightarrow
		\frac{B/H_F\cap(\Omega/H_F,\Omega/H_F)}{(B/H_F,\Omega/H_F)}
	\end{equation}
	is an isomorphism.
	\begin{figure}[H]
		\centering
		\begin{tikzcd}[
				row sep=scriptsize,
				cells={nodes={font=\scriptsize}},
				every label/.append style={font=\scriptsize},
				dashedline/.style={
						dashed,
						no head,
						shorten <= -3pt,
						shorten >= -3pt
					}
			]
			{\bar{K}} \\
			\\
			{K(\ell,T,S)} \\
			F \\
			L \\
			\\
			K
			\arrow[no head, from=1-1, to=3-1]
			\arrow["A"{description}, curve={height=12pt}, dashedline, from=1-1, to=3-1]
			\arrow["{H_F}"{description, pos=0.4}, curve={height=-24pt}, dashedline, from=1-1, to=4-1]
			\arrow["B"{description, pos=0.6}, shift right=2, curve={height=30pt}, dashedline, from=1-1, to=5-1]
			\arrow["\Omega"{description}, shift left=5, curve={height=-30pt}, dashedline, from=1-1, to=7-1]
			\arrow[no head, from=3-1, to=4-1]
			\arrow[no head, from=4-1, to=5-1]
			\arrow[no head, from=5-1, to=7-1]
			\arrow["\Gamma"{description}, curve={height=12pt}, dashedline, from=5-1, to=7-1]
		\end{tikzcd}
		\caption{Field tower used to compare the maps \(g_A\) and \(g_F\).}
		\label{fig:field-tower-comparison}
	\end{figure}
	The quotient map \(\Omega/A\twoheadrightarrow\Omega/H_F\) induces a natural map from the target of \(g_A\) to the target of \(g_F\), and the resulting triangle commutes:
	\begin{figure}[htbp]
		\centering
		\begin{tikzcd}[column sep=large]
			M(\Gamma) \arrow[r,"g_A"] \arrow[dr,"g_F"']
			& \dfrac{B/A\cap(\Omega/A,\Omega/A)}{(B/A,\Omega/A)} \arrow[d]
			\\
			& \dfrac{B/H_F\cap(\Omega/H_F,\Omega/H_F)}{(B/H_F,\Omega/H_F)} .
		\end{tikzcd}
		\caption{Commutative comparison of the natural maps \(g_A\) and \(g_F\).}
		\label{fig:g-map-comparison}
	\end{figure}
	Since \(g_F\) is an isomorphism and \(g_A\) is always a natural epimorphism, the map \(g_A\) must be injective as well: if \(g_A(x)=0\), then \(g_F(x)=0\), and hence \(x=0\).  Therefore \(g_A\) is an isomorphism.

	The subgroup \(B/A\) was arbitrary, so \(\Omega/A\simeq\Gal(K(\ell,T,S)/K)\) is multiplicator free.
\end{proof}

\bibliographystyle{plain}
\bibliography{multiplicator_freeness_unified_results_refs}

\end{document}